\newcommand{\re}{\mathbb{R}}
\newcommand{\co}{\mathbb{C}}
\newcommand{\T}{\mathrm{T}}
\newcommand{\D}{\mathrm{D}}
\newcommand{\R}{\mathrm{R}}
\newcommand{\rp}{\mbox{Re}}
\long\def\symbolfootnote[#1]#2{\begingroup%
\def\thefootnote{\fnsymbol{footnote}}\footnote[#1]{#2}\endgroup}
\newtheorem{thm}{Theorem}[section]
\newtheorem{lem}[thm]{Lemma}
\newtheorem{cor}[thm]{Corollary}
\theoremstyle{definition}
\theoremstyle{remark}
\title{On local existence of solutions for nonlinear systems of Cauchy-Riemann operator of any order in the plane}
\author{Yifei Pan}
\begin{document}

\maketitle
\begin{center}

\end{center}
\begin{abstract}
We prove a general local existence theorem for nonlinear systems of Cauchy-Riemann operator of any order in one complex variable with initial values at a given point, which is a counterpart of local existence of ODE.

\end{abstract}

\large

\section{Introduction}\label{sec0}\symbolfootnote[0]{MSC 2010: 35G20 (Primary); 32G05, 30G20 (Secondary)}

In this paper we consider local existence of solutions for nonlinear partial differential systems of Cauchy-Riemann operator in dimension two from point view of complex analysis.

First we introduce some simple notations. Let $D$ denote the closed disk $\{z\in\co\mid|z|\leq R\}$. Let $m$ be a positive integer. For each $k$: $1\leq k\leq m$, we denote $\mathcal{D}^k v$
as a vector in $\co^{2^k}$ with entries $\partial^\mu\bar\partial^\nu v$ where $\mu+\nu=k$ where $v$ is a complex valued function on $D$. Here we use the standard complex derivatives for partial derivatives as
$\partial=\frac{1}{2}(\frac{\partial}{\partial x}-i\frac{\partial}{\partial y}), \mbox{    }\bar\partial=\frac{1}{2}(\frac{\partial}{\partial x}+i\frac{\partial}{\partial y}),$
where $z=x+iy$.
Let $u$ be a map from $D$ to $\co^n$ with $u=(u^1,..., u^n)$. We denote $\mathcal{D}^k u$ as $(\mathcal{D}^k u^1,..., \mathcal{D}^k u^n)$, which is a vector in $\co^{2^kn}$.

Let $\Omega$ be the set
$$\Omega=D\times \co^n\times\co^{2n}\times\cdot\cdot\cdot\times\co^{2^{m-1}n},$$
with coordinates as $(z, \eta_0, \eta_1, ..., \eta_{m-1})$.

The following is the main result of this paper, which can be considered as an analogue for systems of ordinary differential equations with initial conditions.

\begin{thm}
 Let $a:\Omega \to \co^n$ be any function of class $C^{1+\alpha}$. Let $\mu, \nu$ be nonnegative integers such that $\mu+\nu=m\geq 1$, and $c_{i,\bar j}$ be any vector in $\co^n$ for $i+j\leq m-1$. Then the system of Cauchy-Riemann operators with initial values at the origin:
\begin{eqnarray}
\partial^\mu\bar\partial^\nu u(z)&=&a(z, u,\mathcal{D}^1 u, ..., \mathcal{D}^{m-1}u)\nonumber\\
\partial^i\bar\partial^ju(0)&=&c_{i,\bar j}, \mbox{ for } i+j\leq m-1\nonumber
\end{eqnarray}
has (infinitely many) solutions $u:D\to \co^n$ of class $C^{m+\alpha}$ in $D$ for sufficiently small values of $R$.

\end{thm}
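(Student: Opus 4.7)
The natural strategy is to convert the system into an integral fixed-point equation via a right inverse of $\partial^\mu\bar\partial^\nu$, and then invoke Banach's (or Schauder's) fixed-point theorem on a small enough disk. I would first dispose of the initial conditions algebraically: the polynomial
\[
p(z,\bar z) = \sum_{i+j \le m-1}\frac{c_{i,\bar j}}{i!\,j!}\, z^i\bar z^j
\]
satisfies $\partial^i\bar\partial^j p(0)=c_{i,\bar j}$ for $i+j\le m-1$ by a direct calculation. Writing $u = p+w$, the problem reduces to finding $w\in C^{m+\alpha}(D,\co^n)$ with vanishing $(m{-}1)$-jet at the origin solving
\[
\partial^\mu\bar\partial^\nu w = \tilde a(z,w,\mathcal{D}^1 w,\ldots,\mathcal{D}^{m-1}w),
\]
where $\tilde a$ remains $C^{1+\alpha}$ in all its arguments.

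Next I would build the required right inverse. The modified Cauchy transform
\[
\hat T f(z) := -\frac{1}{\pi}\int_D\Bigl(\frac{1}{\zeta-z}-\frac{1}{\zeta}\Bigr) f(\zeta)\, dA(\zeta)
\]
satisfies $\bar\partial\,\hat T f = f$ and $\hat T f(0)=0$, and an analogous operator $U$ (with $\bar\zeta-\bar z$ replacing $\zeta-z$) does the same for $\partial$. Composing to $\hat T^\nu U^\mu$ and then subtracting the degree-$(m{-}1)$ Taylor polynomial at the origin (which is annihilated by $\partial^\mu\bar\partial^\nu$) produces a bounded right inverse $\mathcal S: C^\alpha(D)\to C^{m+\alpha}(D)$ whose image has vanishing $(m{-}1)$-jet at the origin. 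The standard Vekua/Calder\'on--Zygmund estimates give $\mathcal S$ bounded in the $C^{m+\alpha}$ norm, and importantly its $C^0$ norm and low-order H\"older seminorms gain positive powers of $R$ from the shrinking integration domain. The problem becomes the fixed-point equation
\[
w = F(w) := \mathcal S\bigl[\tilde a(z,w,\mathcal{D}^1 w,\ldots,\mathcal{D}^{m-1}w)\bigr].
\]

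Finally, I would execute a Banach fixed-point argument on a closed ball in the subspace of $C^{m+\alpha}(D,\co^n)$ consisting of functions with vanishing $(m{-}1)$-jet at the origin, equipped with a suitable $R$-weighted norm such as $\sum_{k=0}^{m} R^{k}\|\mathcal{D}^k w\|_{C^0(D)} + R^{m+\alpha}[\mathcal{D}^m w]_{C^\alpha(D)}$. On such $w$, each $\mathcal{D}^k w$ ($k\le m-1$) vanishes at the origin and therefore has $C^0$ size of order $R^{\,m-k-1+\alpha}$ throughout $D$, so the argument of $\tilde a$ is forced into an arbitrarily small neighborhood of the known point $(0,c_{0,\bar 0},\ldots)\in\Omega$ as $R\to 0$; together with the $C^{1+\alpha}$ Lipschitz estimate on $\tilde a$, this makes $F$ a contraction on a sufficiently small ball for $R$ small. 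The infinitely many solutions then arise from the infinite-dimensional kernel of $\partial^\mu\bar\partial^\nu$ restricted to functions with vanishing $(m{-}1)$-jet at the origin (e.g.\ $h(\bar z)$ vanishing to order $m-1$ at $0$), any of which may be added to a solution or absorbed into a different choice of right inverse. The main obstacle is precisely this quantitative control: the top-order Beurling--Ahlfors bound on $\partial\hat T$ does \emph{not} shrink with $R$, so one cannot rely on the operator norm of $\mathcal S$ alone; the contraction constant must be extracted from the vanishing of $w$ and its low-order derivatives at the origin, and the weighted norm must be chosen so that this smallness propagates cleanly through the composition with $\tilde a$ and the application of $\mathcal S$ at every step.
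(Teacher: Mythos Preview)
Your outline is essentially the paper's own strategy: reduce to zero $(m{-}1)$-jet by subtracting a polynomial, build a right inverse $\mathcal S$ of $\partial^\mu\bar\partial^\nu$ from iterated Cauchy--Green operators followed by subtraction of the degree-$(m{-}1)$ Taylor polynomial at the origin, and run a Banach fixed-point argument on the subspace $C_0^{m+\alpha}(D)$ of functions with vanishing $(m{-}1)$-jet. You also correctly identify the crucial mechanism: the top-order singular-integral bound on $\partial T$ carries no factor of $R$, so the contraction constant must come from the vanishing of the low-order derivatives at the origin (the paper packages this as $\|\partial^i\bar\partial^j f\|\le CR^{m-l}\|f\|^{(m)}$ for $i+j=l$, and uses the pure top-order norm $\|\cdot\|^{(m)}$ rather than your weighted norm, but either bookkeeping works).

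One point needs correction. Because the equation is \emph{nonlinear}, you cannot obtain new solutions by ``adding a kernel element of $\partial^\mu\bar\partial^\nu$ to a solution'': if $u$ solves $\partial^\mu\bar\partial^\nu u=a(z,u,\ldots)$ and $\partial^\mu\bar\partial^\nu\psi=0$, then $u+\psi$ generally fails the equation since the right-hand side becomes $a(z,u+\psi,\ldots)$. Your parenthetical alternative---absorbing $\psi$ into the right inverse---is the correct picture, and it is exactly what the paper does: for each homogeneous degree-$m$ polynomial $\psi$ with $\partial^\mu\bar\partial^\nu\psi=0$ and $\|\psi\|^{(m)}$ small, one solves the \emph{modified} fixed-point equation $u=\psi+\Theta(u)$, obtaining a distinct solution for each such $\psi$.
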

As a special case we state the following
\begin{cor}
Let $a:D\times \co^n\to \co^n$ be any function of class $C^{1+\alpha}$. Let $c$ be any vector in $\co^n$.
Then the following nonlinear first odrer partial differential system of Cauchy-Riemnn operator
\begin{eqnarray}
\bar\partial u(z)&=&a(z, u(z))\nonumber\\
u(0)&=&c\nonumber
\end{eqnarray}
has (infinitely many) solutions $u:D\to \co^n$ of class $C^{1+\alpha}$ in $D$ for sufficiently small values of $R$.
\end{cor}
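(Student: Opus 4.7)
The statement is the special case $m=1$, $\mu=0$, $\nu=1$, $c_{0,\bar 0}=c$ of Theorem~1.1, so it is immediate once the theorem is available; what follows is the plan I would use to establish it directly, since the argument already exhibits the core analytic ingredients of the general case. The plan is a Picard/contraction scheme built on the Cauchy--Green (Pompeiu) operator
\[
Pf(z)=-\frac{1}{\pi}\int_{D}\frac{f(\zeta)}{\zeta-z}\,dA(\zeta),\qquad \bar\partial Pf=f,
\]
which is a right inverse of $\bar\partial$ on $D$. To incorporate the initial value I would look for solutions of the form
\[
u(z)=c+h(z)+Pf(z)-Pf(0),\qquad f(\zeta):=a(\zeta,u(\zeta)),
\]
where $h$ is any holomorphic function on $D$ with $h(0)=0$; this guarantees both $u(0)=c$ and $\bar\partial u=a(z,u)$, and varying $h$ will produce the infinitely many solutions promised, since distinct $h$'s force distinct $u$'s (if $u_1=u_2$ then $f_1=f_2$ and hence $h_1=h_2$).

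Fix such an $h$ and a radius $M>0$, and consider the closed ball
\[
X_R=\bigl\{u\in C^{1+\alpha}(D,\co^n):\ u(0)=c,\ \|u-c-h\|_{C^{1+\alpha}(D)}\leq M\bigr\}.
\]
The key analytic input is the classical Vekua--Kellogg mapping property of $P$: on a disk of radius $R$ the operator $P:C^{\alpha}(D)\to C^{1+\alpha}(D)$ is bounded, and moreover $\|Pf\|_{L^\infty(D)}\leq CR\,\|f\|_{L^\infty(D)}$ with a constant independent of $R$. Using the $C^{1+\alpha}$ hypothesis on $a$, I would check that the substitution operator $u\mapsto a(\,\cdot\,,u(\,\cdot\,))$ sends $X_R$ into a bounded subset of $C^{\alpha}(D)$ and is Lipschitz from $C^{1+\alpha}(D)$ into $C^{\alpha}(D)$. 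Combined with the factor $R$ from the $L^\infty$ bound on $P$, this makes the operator $Tu(z):=c+h(z)+Pf(z)-Pf(0)$ a self-map and a strict contraction on $X_R$ once $R$ is taken sufficiently small, and the Banach fixed-point theorem then yields a solution $u\in C^{1+\alpha}(D,\co^n)$ with $u(0)=c$, one for each admissible choice of $h$.

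The main obstacle is the composition estimate: verifying that for $u\in C^{1+\alpha}$ with values in a fixed bounded set of $\co^n$ and $a\in C^{1+\alpha}$, one has $a(\,\cdot\,,u(\,\cdot\,))\in C^{\alpha}$ with norm controlled by the chosen bounds on $u$, and that the dependence on $u$ is Lipschitz in the appropriate topologies. This is a careful but elementary chain-rule computation, exploiting that $C^{1+\alpha}$ composed with $C^{1+\alpha}$ remains $C^{1+\alpha}$ and that the $C^\alpha$-seminorm of $a(z,u_1(z))-a(z,u_2(z))$ is controlled through the mean-value representation $\int_0^1 D_{\eta_0}a(z,u_2+t(u_1-u_2))\,dt\cdot(u_1-u_2)$. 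Once this is in hand, the contraction argument goes through routinely, the smallness of $R$ absorbs the relevant constants, and the $C^{1+\alpha}$ regularity of the fixed point is inherited from the mapping properties of $P$.
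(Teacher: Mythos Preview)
Your opening line is exactly how the paper treats the corollary: it is simply recorded as the case $m=1$, $\mu=0$, $\nu=1$, $c_{0,\bar 0}=c$ of Theorem~1.1, with no separate argument given. The direct Picard scheme you then sketch is, in outline, the $m=1$ specialisation of the paper's own proof of Theorem~1.1: the paper works on the space $C_0^{1+\alpha}(D)$ of $C^{1+\alpha}$ functions vanishing at the origin with the norm $\|f\|^{(1)}=\max\{\|\partial f\|,\|\bar\partial f\|\}$, sets up the operator $\Theta(f)=T\,a(\cdot,f)-T\,a(\cdot,f)(0)$ (your $Pf-Pf(0)$), and runs the contraction there; your constant $c$ and holomorphic $h$ correspond to the paper's addition of a polynomial $p$ and of a $\psi$ with $\bar\partial\psi=0$ after the fact.

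One point in your sketch needs sharpening. You locate the smallness required for the contraction in ``the factor $R$ from the $L^\infty$ bound on $P$'', but that factor controls only the zeroth-order part of $\|Tu_1-Tu_2\|_{C^{1+\alpha}}$; since $\bar\partial Pg=g$, the first-derivative contribution to that norm is $\|a(\cdot,u_1)-a(\cdot,u_2)\|_{C^\alpha}$ with \emph{no} gain coming from $P$. The genuine source of smallness is the constraint $u_1(0)=u_2(0)=c$ built into $X_R$: because $(u_1-u_2)(0)=0$ one has $\|u_1-u_2\|_{C^\alpha}\lesssim R^{1-\alpha}\|u_1-u_2\|_{C^{1+\alpha}}$, and it is this factor---the paper's Lemmas~2.3--2.4 and the $R^{m-p}$ appearing in its estimate~(8)---that drives the contraction constant to zero as $R\to 0$. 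With that correction your plan goes through and coincides with the paper's method.
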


In order to prove Theorem 1.1, it suffices to prove the case of zero initial values. The general case of initial values can be converted easily. The whole paper is devoted to give a proof for the case of zero initial values.

Finally, The main idea and method have been applied to higher dimensions of elliptic operators [PZ], {PY]. 

\subsection{Examples of no solutions-Mizohata equation}
After a famous example of Lewy [L] in $\re^3$, Mizohata [M] considered, in $\re^2$, the following equation
$$\frac{\partial u}{\partial x}+i x\frac{\partial u}{\partial y}=F(x,y)\label{eq0}.$$
It was proved in [M] that there is a smooth function $F$ for which the above equation has no solution near the origin. Converting the equation to complex one, one has the following equation
\begin{eqnarray}
\bar\partial u =\frac{1}{1+\rp z}F(z,\bar z)-\frac{1-\rp z}{1+\rp z}\partial u.
\end{eqnarray}
According to the notation as Theorem 1.1, we have
$$a(z, \eta_0, \eta)=\frac{1}{1+\rp z}F(z,\bar z)-\frac{1-\rp z}{1+\rp z}\eta,$$
whence
$$\partial_\eta(0)=-1, \bar\partial_\eta a(0)=0.$$
Therefore the condition (1) is not met for Theorem 1.1. Taking $\partial^\mu\bar\partial^{\nu-1}$ on both side, we have
$$\partial^\mu\bar\partial^{\nu}u=-\frac{1-\rp z}{1+\rp z}\partial^{\mu+1}\bar\partial^{\nu-1}u+\cdot\cdot\cdot+\partial^\mu\bar\partial^{\nu-1}\left\{\frac{1}{1+\rp z}F(z,\bar z)\right\}.$$
This is a differential equation of any order which has no solutions locally at the origin.

\subsection{Osserman's theorem}
We remark that it is classical that the equation $\Delta u=e^{2u}$ has no solutions in the whole plane $\co$ due to Ahlfors [A], and local existence puts constrain on the radius of existence due to Osserman [O]. Therefore, the vanishing condition in Theorem B can't be dropped, and the existence of radius in Theorem A can't be arbitrarily large. In fact, solve a solution $u$ by Theorem A, of
$$\Delta u=e^{2u}, u(0)=a, \nabla u(0)=b$$
for $|z|\leq R$. Then by [O], we have $R\leq 2e^{-a}$, which goes to zero if $a\to\infty$.

Also we can solve the following
$$\bar\partial u=u^2$$
specifically to get
$$u(z)=\frac{1}{\varphi(z)+\bar z}$$
where $\varphi(z)$ is an entire function. From this, we can see the radius of existence of the solution can be any radius $R$.
On the other hand, we can solve
$$\bar\partial u=e^u$$
to get
$$u(z)=\ln(\varphi(z)+\bar z)$$
where we can define a branch cut so that $u$ is well-defined.
\section{Function spaces and their norms}

\subsection{H\" older space}

Let $C^{\alpha}(D)$ be the set of all functions $f$ on $D$ for which
$$ H_{\alpha}[f]=\sup\left\{{\frac{|f(z)-f(z')|}{|z-z'|^{\alpha}} \bigg| z,z' \in \D} \right\}$$
is finite. Let $C^{k}(D)$ be the set of all function $f$ on $D$ whose $k^{\textup{th}}$ order partial derivatives exist and are continuous, $k$ an integer, $k \geq 0$.
$C^{k+\alpha}(D)$ is the set of all functions $f$ on $D$ whose $k^{\textup{th}}$ order partial derivatives exist and belong to $C^{\alpha}(D)$.

The symbol $|f|$ or $|f|_\D$ denotes $\textup{sup}_{z\in D}|f(z)|$.
For $f\in C^\alpha(D)$ we define
$$\|f\|=|f|+(2R)^\alpha H_\alpha[f].$$
The set of $n$-tuples $f=(f_1,..., f_n)$ of functions (vector functions or maps) of $C^\alpha (D)$ is denoted by $[C^\alpha (D)]^n$, and $H_\alpha[f]$ is defined as the maximum of $H_\alpha[f_i](i=1,..,n)$. In a similar fashion we define $|f|_A=\sup_{z\in A}|f(z)|$ for functions and vector functions, and write $|f|$ when the domain is understood. Finally, in this paper throughout, the norm of $\co^N$ is taken as $|v|=\max|v_j|$.

The following lemma is well-known; for a proof see ([NW], 7.1b).
\begin{lem}
The function $\|\cdot\cdot\cdot\|$ defined on $C^{\alpha}(D)$ is a norm, with respect to which $C^{\alpha}(D)$ is a Banach algebra: $\|fg\|\leq \|f\|\|g\|$.
\end{lem}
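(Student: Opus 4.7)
The plan is to verify in order the three claims---the norm axioms, the submultiplicative inequality, and completeness---since each step feeds into the next.

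For the norm axioms I would observe that $\|f\|=|f|+(2R)^\alpha H_\alpha[f]$ is the sum of a genuine norm (the sup norm) and a non-negative multiple of a seminorm (the H\"older seminorm). Non-negativity, positive homogeneity, and the triangle inequality carry over termwise, and definiteness follows immediately since $\|f\|=0$ already forces $|f|=0$, hence $f\equiv 0$.

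For the multiplicative estimate I would start from the standard product identity
$$f(z)g(z)-f(z')g(z')=f(z)\bigl(g(z)-g(z')\bigr)+\bigl(f(z)-f(z')\bigr)g(z'),$$
divide by $|z-z'|^\alpha$, and take the supremum to obtain $H_\alpha[fg]\leq|f|H_\alpha[g]+|g|H_\alpha[f]$. Combining this with $|fg|\leq|f||g|$ and expanding the product $\|f\|\|g\|$ leads to
$$\|f\|\|g\|-\|fg\|\ \geq\ (2R)^{2\alpha}\,H_\alpha[f]\,H_\alpha[g]\ \geq\ 0,$$
which gives the submultiplicativity. This is the one computation I would carry out in full; the point is that the mixed terms from the expansion of $\|f\|\|g\|$ are exactly the ones needed to dominate $(2R)^\alpha H_\alpha[fg]$, with a non-negative surplus.

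For completeness, a $\|\cdot\|$-Cauchy sequence $\{f_n\}$ is uniformly Cauchy, because $|f_n-f_m|\leq\|f_n-f_m\|$, so $f_n\to f$ uniformly on $D$ for some continuous $f$. For any fixed pair $z\neq z'$ and any indices $n,m$, the difference quotient $|(f_n-f_m)(z)-(f_n-f_m)(z')|/|z-z'|^\alpha$ is bounded by $(2R)^{-\alpha}\|f_n-f_m\|$; sending $m\to\infty$ (legal by the pointwise convergence guaranteed by uniform convergence) and then taking the sup over $z\neq z'$ shows $(2R)^\alpha H_\alpha[f_n-f]\to 0$, which places $f\in C^\alpha(D)$ and gives $\|f_n-f\|\to 0$. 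The only delicate moment in the whole argument is this interchange of limits in the H\"older seminorm, which is why I would postpone completeness to the end.
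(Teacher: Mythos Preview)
Your argument is correct in all three parts: the norm axioms are immediate, the submultiplicativity follows exactly as you compute (the cross terms in the expansion of $\|f\|\|g\|$ absorb $(2R)^\alpha H_\alpha[fg]$ with the nonnegative surplus $(2R)^{2\alpha}H_\alpha[f]H_\alpha[g]$), and the completeness argument via uniform convergence plus passing to the limit in the H\"older quotient is the standard one.

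The paper does not actually prove this lemma; it simply records it as well known and refers the reader to Nijenhuis--Woolf~[NW, 7.1b]. Your write-up is precisely the classical verification one finds there, so there is no substantive difference in approach---you have just supplied the details the paper elected to cite.
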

The following simple lemma is to be used multiple times throughout the paper.
\begin{lem} If $f\in C^{k+\alpha}(D)$, then
$$|f(z')-\sum_{l=0}^k\frac{1}{l!}\sum_{i+j=l}\partial^i\bar\partial^j f(z)(z'-z)^i{(\bar z'-\bar z)}^j|\leq \bigg\{\sum_{i+j=k}H_\alpha[\partial^i\bar\partial^j f]\bigg\}|z'-z|^{k+\alpha}.$$
\end{lem}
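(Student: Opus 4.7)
The statement is a Taylor remainder estimate in the Wirtinger calculus, so the natural approach is to reduce to the one-variable case by restricting $f$ to the line segment from $z$ to $z'$.

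First I would set $\gamma(t)=z+t(z'-z)$ for $t\in[0,1]$ and consider $g(t)=f(\gamma(t))$. Since $f\in C^{k+\alpha}(D)$ and $\gamma$ is affine, $g$ inherits $C^{k+\alpha}$ regularity on $[0,1]$. Using that $\frac{d}{dt}f(\gamma(t))=(z'-z)\partial f(\gamma(t))+(\bar z'-\bar z)\bar\partial f(\gamma(t))$, an induction gives the chain-rule identity
$$g^{(l)}(t)=\sum_{i+j=l}\binom{l}{i}(z'-z)^i(\bar z'-\bar z)^j(\partial^i\bar\partial^j f)(\gamma(t)).$$
This is the one essential computation; from it the $l!$ in the denominator of the Taylor polynomial combines with $\binom{l}{i}$ in the usual way to produce the coefficients displayed in the lemma.

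Next I would apply the one-dimensional Taylor theorem with integral remainder to $g$ on $[0,1]$. Writing $\int_0^1\frac{(1-t)^{k-1}}{(k-1)!}\,dt=\frac{1}{k!}$, the remainder can be put in the convenient form
$$g(1)-\sum_{l=0}^{k}\frac{g^{(l)}(0)}{l!}=\int_0^1\frac{(1-t)^{k-1}}{(k-1)!}\bigl[g^{(k)}(t)-g^{(k)}(0)\bigr]\,dt,$$
which is precisely the expression on the left-hand side of the inequality (after substituting the chain-rule formula at $t=0$).

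Finally I would estimate this remainder. By the chain-rule identity,
$$|g^{(k)}(t)-g^{(k)}(0)|\le \sum_{i+j=k}\binom{k}{i}|z'-z|^{k}\,H_\alpha[\partial^i\bar\partial^j f]\,|\gamma(t)-z|^{\alpha},$$
and $|\gamma(t)-z|^{\alpha}=t^{\alpha}|z'-z|^{\alpha}$. Inserting this into the integral and using $\int_0^1\frac{(1-t)^{k-1}t^{\alpha}}{(k-1)!}\,dt\le\frac{1}{k!}$ yields a bound of the desired shape $\bigl\{\sum_{i+j=k}H_\alpha[\partial^i\bar\partial^j f]\bigr\}|z'-z|^{k+\alpha}$, modulo bookkeeping of the binomial constants $\binom{k}{i}$. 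There is no real obstacle: the only thing to be careful about is tracking these combinatorial constants, and the underlying estimate is simply the standard Hölder-Taylor remainder transported through the Wirtinger chain rule.
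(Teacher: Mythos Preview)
Your approach is essentially the same as the paper's: both restrict $f$ to the segment $t\mapsto z+t(z'-z)$, expand the $k$th derivative via the Wirtinger chain rule, and estimate the integral remainder using the H\"older condition on the top-order derivatives. The only cosmetic difference is that the paper writes the remainder as an iterated integral $\int_0^1\!\int_0^{t_{k-1}}\!\cdots\!\int_0^{t_1}\,dt\,dt_1\cdots dt_{k-1}$ rather than with the weight $(1-t)^{k-1}/(k-1)!$; your tracking of the binomial factors $\binom{k}{i}$ is in fact more careful than the paper's, which silently drops them.
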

\begin{proof}
Expanding at $z$, we have the formula
\begin{eqnarray*}
&&f(z')-\sum_{l=0}^{k-1}\frac{1}{l!}\sum_{i+j=l}\partial^i\bar\partial^j f(z)(z'-z)^i{(\bar z'-\bar z)}^j\\
&=&\int_0^1\int_0^{t_{k-1}}\cdot\cdot\cdot\int_0^{t_1}\left\{\frac{d^k}{dt^k}f(t z'+(1-t)z)\right\}dtdt_1\cdot\cdot\cdot dt_{k-1}\nonumber\\
&=&\int_0^1\int_0^{t_{k-1}}\cdot\cdot\cdot\int_0^{t_1}\left\{\sum_{i+j=k}\partial^i\bar\partial^j f(tz'+(1-t)z)(z'-z)^i{(\bar z'-\bar z)}^j\right\}dtdt_1\cdot\cdot\cdot dt_{k-1}.\nonumber
\end{eqnarray*}
Hence, by subtracting kth term $\frac{1}{k!}\sum_{i+j=k}\partial^i\bar\partial^j f(z)(z'-z)^i{(\bar z'-\bar z)}^j$ from both sides, we have
$$f(z')-\sum_{l=0}^k\frac{1}{l!}\sum_{i+j=l}\partial^i\bar\partial^j f(z)(z'-z)^i{(\bar z'-\bar z)}^j$$
$$=\int_0^1\int_0^{t_{k-1}}\cdot\cdot\cdot\int_0^{t_1}\left\{\sum_{i+j=k}\{\partial^i\bar\partial^j f(tz'+(1-t)z)-\partial^i\bar\partial^j f(z)\}(z'-z)^i{(\bar z'-\bar z)}^j\right\}dtdt_1\cdot\cdot\cdot dt_{k-1}$$
Thus we have,
$$|f(z')-\sum_{l=0}^k\frac{1}{l!}\sum_{i+j=l}\partial^i\bar\partial^j f(z)(z'-z)^i{(\bar z'-\bar z)}^j|$$
$$\leq\sum_{i+j=k}H_\alpha[\partial^i\bar\partial^j f]|z'-z|^{k+\alpha}.$$
This completes the proof.
\end{proof}
\subsection{Function spaces with vanishing order at the origin}
Our idea of solving differential equations of order $m$ is to look for solutions that vanish up to $m-1$ order at the origin; this way the norm estimate of the function space to be considered later is made possible using only $m$th order derivatives. 
The general case of initial values can be converted to the special case of zero initial values. We denote for $k\geq 1$, $C_0^{k+\alpha}(D)$ the set of all functions in $C^{k+\alpha}(D)$ whose derivatives vanish up to order $k-1$ at the origin. Specifically
$$C_0^{k+\alpha}(D)=\{f\in  C^{k+\alpha}(D)\big | \partial^i\bar \partial^j f(0)=0, i+j\leq k-1\}.$$

We now define functions $\|\cdot\cdot\cdot\|^{(k)}$ on $C^{k+\alpha}(D)$ inductively. On $C^{1+\alpha}(D)$ we define, following [NW],
$\|f\|^{(1)}=\max\{\|\partial f\|, \|\bar\partial f\|\}.$
For $k\geq 2$, we define
$\|f\|^{(k)}=\max\{\|\partial f\|^{(k-1)}, \|\bar\partial f\|^{(k-1)}\}.$
Obviously, we have the definition $\|\cdot\cdot\cdot\|^{(k)}$ in terms of $\|\cdot\cdot\cdot\|$:
$\|f\|^{(k)}=\max_{i+j=k}\{\|\partial^i\bar\partial^j f\|\}.$
We point out that the function $\|\cdot\cdot\cdot\|^{(k)}$ on $C^{k+\alpha}(D)$ is not a norm since $\|f\|^{(k)}=0$ if and only if $f$ is
a polynomial of degree at most $k-1$. However it becomes norm when restricted to the subspace $C_0^{k+\alpha}(D)$, which is to be proved below. In this paper, we denote $C$ as a constant independent of $R$ varying from line to line. First we obtain some useful estimates.
\begin{lem}
If $f\in C_0^{k+\alpha}(D)$, then
$\|f\|\leq CR^k\|f\|^{(k)}.$
\end{lem}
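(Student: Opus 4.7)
My plan is to prove the lemma by induction on $k$. The definition $\|f\|^{(k)}=\max(\|\partial f\|^{(k-1)},\|\bar\partial f\|^{(k-1)})$ is well suited to induction, and the class $C_0^{k+\alpha}(D)$ has the companion property that differentiating once drops the vanishing index by exactly one: for $f\in C_0^{k+\alpha}(D)$ and any indices with $i+j\leq k-2$, one has $\partial^i\bar\partial^j(\partial f)(0)=\partial^{i+1}\bar\partial^j f(0)=0$, and similarly for $\bar\partial f$. Thus $\partial f,\bar\partial f\in C_0^{(k-1)+\alpha}(D)$, which is what allows the induction to close.

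For the base case $k=1$, I take $f\in C_0^{1+\alpha}(D)$, so $f(0)=0$. For $|f|$ I would write $f(z)=\int_0^1\frac{d}{dt}f(tz)\,dt=\int_0^1\bigl(z\,\partial f(tz)+\bar z\,\bar\partial f(tz)\bigr)\,dt$, which yields $|f(z)|\leq 2R\,\|f\|^{(1)}$. For the Hölder seminorm I would integrate along the segment from $z'$ to $z$ to get a Lipschitz estimate $|f(z)-f(z')|\leq 2|z-z'|\,\|f\|^{(1)}$, and then use $|z-z'|\leq 2R$ to split $|z-z'|=|z-z'|^{\alpha}|z-z'|^{1-\alpha}\leq(2R)^{1-\alpha}|z-z'|^{\alpha}$. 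This gives $(2R)^{\alpha}H_\alpha[f]\leq CR\,\|f\|^{(1)}$, and combined with the sup-norm bound we get $\|f\|\leq CR\,\|f\|^{(1)}$.

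For the inductive step, assume the inequality for $k-1\geq 1$, and let $f\in C_0^{k+\alpha}(D)$. By the observation above, $\partial f$ and $\bar\partial f$ lie in $C_0^{(k-1)+\alpha}(D)$, so the inductive hypothesis gives $\|\partial f\|\leq CR^{k-1}\|\partial f\|^{(k-1)}\leq CR^{k-1}\|f\|^{(k)}$ and analogously for $\bar\partial f$. Hence $\|f\|^{(1)}=\max(\|\partial f\|,\|\bar\partial f\|)\leq CR^{k-1}\|f\|^{(k)}$. Since $k\geq 2$ forces $f(0)=0$, the base case applies to $f$ itself, delivering $\|f\|\leq CR\,\|f\|^{(1)}\leq CR^{k}\|f\|^{(k)}$.

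I do not expect a serious obstacle. The only point requiring any care is the conversion from the Lipschitz bound to the Hölder bound in the base case, where one must keep track of the factors of $R$ so that the final constant $C$ is independent of $R$ (as the convention of the paper demands). Everything else is a clean application of the fundamental theorem of calculus and the recursive definition of $\|\cdot\|^{(k)}$. As an alternative I note that one could also apply Lemma 2.2 directly at $z=0$ with order $k$, use the vanishing of all derivatives of order at most $k-1$ to kill the lower Taylor terms, and bound the remaining order-$k$ term at the origin by its sup norm; this yields $|f|\leq CR^{k}\|f\|^{(k)}$ immediately, but the Hölder seminorm of $f$ still requires either the induction above or a separate argument, so I prefer the inductive route as it handles both pieces uniformly.
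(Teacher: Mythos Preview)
Your induction argument is correct. The base case is fine: the fundamental theorem of calculus yields $|f|\le 2R\,\|f\|^{(1)}$ and the Lipschitz bound $|f(z)-f(z')|\le 2|z-z'|\,\|f\|^{(1)}$, and the conversion $|z-z'|\le (2R)^{1-\alpha}|z-z'|^\alpha$ gives $(2R)^\alpha H_\alpha[f]\le 4R\,\|f\|^{(1)}$, so $\|f\|\le 6R\,\|f\|^{(1)}$. The inductive step is clean since $\partial f,\bar\partial f\in C_0^{(k-1)+\alpha}(D)$ and $f\in C_0^{k+\alpha}(D)\subset C_0^{1+\alpha}(D)$ for $k\ge 2$.

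The paper, however, proceeds differently and in one stroke: it writes the $k$-fold integral Taylor remainder
\[
f(z)=\sum_{i+j=k}\Bigl(\int_0^1\!\!\int_0^{t_{k-1}}\!\!\cdots\!\!\int_0^{t_1}\partial^i\bar\partial^j f(tz)\,dt\,dt_1\cdots dt_{k-1}\Bigr)z^i\bar z^j,
\]
and then applies the Banach \emph{algebra} property $\|fg\|\le\|f\|\,\|g\|$ of Lemma~2.1 together with $\|z\|=3R$ to bound the full norm $\|f\|$ directly, obtaining $\|f\|\le\frac{1}{k!}\sum_{i+j=k}\|\partial^i\bar\partial^j f\|\,(3R)^k\le CR^k\|f\|^{(k)}$. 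So the paper handles the sup part and the H\"older part simultaneously through the single norm $\|\cdot\|$, whereas you treat them separately in the base case and then induct. Your route is slightly more elementary in that it never invokes Lemma~2.1, at the cost of a less explicit constant (essentially $6^k$ rather than $(k+1)3^k/k!$) and a two-step structure; the paper's route is shorter and makes essential use of the multiplicative norm inequality it has set up.
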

\begin{proof}
Let $f\in C_0^{k+\alpha}(D)$, then
\begin{eqnarray*}
f(z)&=&\int_0^1\int_0^{t_{k-1}}\cdot\cdot\cdot\int_0^{t_1}\left\{\frac{d^k}{dt^k}f(t z)\right\}dtdt_1\cdot\cdot\cdot dt_{k-1}\nonumber\\
 &=&\int_0^1\int_0^{t_{k-1}}\cdot\cdot\cdot\int_0^{t_1}\left\{\sum_{i+j=k}\partial^i\bar\partial^j f(t z)z^i\bar z^j \right\}dtdt_1\cdot\cdot\cdot dt_{k-1}\nonumber\\
&=&\sum_{i+j=k}\left\{\int_0^1\int_0^{t_{k-1}}\cdot\cdot\cdot\int_0^{t_1}\partial^i\bar\partial^j f(t z) dtdt_1\cdot\cdot\cdot dt_{k-1}\right\}z^i\bar z^j.\nonumber
\end{eqnarray*}
Applying norm inequality, we obtain
\begin{eqnarray*}
\|f\|&\leq & \sum_{i+j=k}\frac{1}{k!}\|\partial^i\bar\partial^j f\|\|z^i\bar z^j\|\nonumber\\
&\leq &\sum_{i+j=k}\frac{1}{k!}\|\partial^i\bar\partial^j f\|\|z\|^k\leq CR^k\|f\|^{(k)}\nonumber,
\end{eqnarray*}
where we have used $\|z\|=3R$, which is easily verified.
\end{proof}
\begin{lem}
If $f\in C_0^{m+\alpha}(D)$, then, for $l\leq m$,
$$\|f\|^{(l)}\leq CR^{m-l}\|f\|^{(m)}.$$
\end{lem}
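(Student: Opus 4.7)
My plan is to reduce the statement to Lemma 2.3 (the case $l=0$) by differentiating and reinterpreting the vanishing order. Since $\|f\|^{(l)} = \max_{i+j=l}\|\partial^i\bar\partial^j f\|$, it suffices to fix $i,j$ with $i+j=l$ and bound $\|\partial^i\bar\partial^j f\|$ by $CR^{m-l}\|f\|^{(m)}$.

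The key observation is that if I set $g := \partial^i\bar\partial^j f$, then because $f\in C_0^{m+\alpha}(D)$ one has $\partial^a\bar\partial^b g(0) = \partial^{a+i}\bar\partial^{b+j}f(0)=0$ whenever $a+b \leq (m-l)-1$, since then $(a+i)+(b+j) \leq m-1$. Thus $g$ belongs to $C_0^{(m-l)+\alpha}(D)$, and Lemma 2.3 (applied with $m-l$ in place of $k$) yields
$$\|g\| \leq CR^{m-l}\|g\|^{(m-l)}.$$

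Next I would unfold the right-hand side: by the inductive definition,
$$\|g\|^{(m-l)} = \max_{a+b=m-l}\|\partial^a\bar\partial^b g\| = \max_{a+b=m-l}\|\partial^{a+i}\bar\partial^{b+j}f\| \leq \max_{c+d=m}\|\partial^c\bar\partial^d f\| = \|f\|^{(m)},$$
because $\{(a+i,b+j) : a+b=m-l\}$ is a subset of $\{(c,d) : c+d=m\}$. Combining, $\|\partial^i\bar\partial^j f\| \leq CR^{m-l}\|f\|^{(m)}$, and taking the maximum over $i+j=l$ gives the lemma.

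I do not expect any real obstacle here; the only thing to verify carefully is the bookkeeping on the vanishing order (that differentiating $i+j=l$ times lowers the vanishing order by exactly $l$, leaving $m-l-1$), which is immediate. The case $l=m$ is trivial (constant $C=1$ works), and the case $l=0$ is exactly Lemma 2.3. The intermediate cases all follow from the single application of Lemma 2.3 to the derivative $g=\partial^i\bar\partial^j f$.
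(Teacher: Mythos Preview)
Your proof is correct and follows essentially the same argument as the paper: for $i+j=l$ you observe that $\partial^i\bar\partial^j f\in C_0^{(m-l)+\alpha}(D)$, apply Lemma~2.3 with $k=m-l$, and then bound $\|\partial^i\bar\partial^j f\|^{(m-l)}\le \|f\|^{(m)}$. The paper's proof is simply a terser version of what you wrote.
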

\begin{proof}
Let  $f\in C_0^{m+\alpha}(D)$. If $i+j=l$, then $\partial^i\bar\partial^j f\in C_0^{m-l+\alpha}(D)$. By Lemma 2.3, we have
$$\|\partial^i\bar\partial^j f\|\leq CR^{m-l}\|\partial^i\bar\partial^j f\|^{(m-l)}\leq CR^{m-l}\| f\|^{(m)}.$$
\end{proof}

\begin{lem}
The function space $C_0^{k+\alpha}(D)$ equipped with the function $\|\cdot\cdot\cdot\|^{(k)}$ is a Banach space.
\end{lem}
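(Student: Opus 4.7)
The plan is to verify separately that $\|\cdot\|^{(k)}$ is indeed a norm on $C_0^{k+\alpha}(D)$ and then show completeness. Homogeneity and the triangle inequality are immediate from the definition $\|f\|^{(k)}=\max_{i+j=k}\|\partial^i\bar\partial^j f\|$ together with the fact, established in Lemma 2.1, that $\|\cdot\|$ is a norm on $C^\alpha(D)$. The only nontrivial part of the norm check is positive definiteness, and this is exactly where the vanishing condition at the origin does its work: by Lemma 2.3, $\|f\|\leq CR^k\|f\|^{(k)}$ for $f\in C_0^{k+\alpha}(D)$, so $\|f\|^{(k)}=0$ forces $|f|=0$, hence $f\equiv 0$.

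For completeness, suppose $\{f_n\}\subset C_0^{k+\alpha}(D)$ is Cauchy with respect to $\|\cdot\|^{(k)}$. The first step is to observe that for every multi-index $(i,j)$ with $i+j\leq k$, the sequence $\{\partial^i\bar\partial^j f_n\}$ is Cauchy in the Banach algebra $(C^\alpha(D),\|\cdot\|)$. For $i+j=k$ this is immediate from the definition of $\|\cdot\|^{(k)}$. For $i+j=l<k$, apply Lemma 2.4 to $f_n-f_m\in C_0^{k+\alpha}(D)$ to get
\[
\|\partial^i\bar\partial^j(f_n-f_m)\|\leq \|f_n-f_m\|^{(l)}\leq CR^{k-l}\|f_n-f_m\|^{(k)},
\]
(and Lemma 2.3 handles the case $l=0$). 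Hence each sequence $\{\partial^i\bar\partial^j f_n\}$ converges in $\|\cdot\|$ to some $g_{i,j}\in C^\alpha(D)$; write $f:=g_{0,0}$.

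The second step is the standard differentiability transfer: since $\|\cdot\|$-convergence implies uniform convergence, the uniform limits of the derivatives must coincide with the derivatives of the uniform limit, so $g_{i,j}=\partial^i\bar\partial^j f$ for all $i+j\leq k$. This gives $f\in C^{k+\alpha}(D)$. Because $\partial^i\bar\partial^j f_n(0)=0$ for $i+j\leq k-1$ and convergence is uniform, the same holds for $f$, so $f\in C_0^{k+\alpha}(D)$. Finally $\|f_n-f\|^{(k)}=\max_{i+j=k}\|\partial^i\bar\partial^j f_n-\partial^i\bar\partial^j f\|\to 0$, proving convergence in the $\|\cdot\|^{(k)}$ norm.

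The main technical point, and the only place where any real work is required, is the derivative-transfer argument; everything else is a direct appeal to Lemmas 2.1, 2.3, and 2.4. No obstacle is genuinely serious here, since uniform convergence of a function sequence together with uniform convergence of each partial derivative is a classical sufficient condition for the limit to be differentiable with the expected derivative; applying this inductively on $l=k-1,k-2,\dots,0$ produces $f\in C^{k+\alpha}(D)$ with $\partial^i\bar\partial^j f=g_{i,j}$.
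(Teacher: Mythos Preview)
Your proof is correct. It follows a different organizational route from the paper's, though the underlying mechanism is the same. The paper argues by norm equivalence: it introduces the full norm $|||f|||=\sum_{i+j\leq k}\|\partial^i\bar\partial^j f\|$, invokes as well known that $(C^{k+\alpha}(D),|||\cdot|||)$ is Banach, uses Lemma~2.4 to see that $\|\cdot\|^{(k)}$ and $|||\cdot|||$ are equivalent on $C_0^{k+\alpha}(D)$, and then observes that $C_0^{k+\alpha}(D)$ is closed in $C^{k+\alpha}(D)$. You instead run the Cauchy-sequence argument directly, again using Lemmas~2.3--2.4 to propagate Cauchyness from the top derivatives to all lower ones, and then invoke the classical derivative-transfer under uniform convergence. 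The paper's version is shorter because it offloads the completeness step to the ambient space; your version is more self-contained and makes explicit what is implicit in the norm-equivalence statement. Either way, the essential content is Lemma~2.4.
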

\begin{proof}
First define $|||f|||=\sum_{i+j=0}^k\|\partial^i\bar\partial^j f\|$. It is well known that $C^{k+\alpha}(D)$ equipped with the norm $|||\cdot|||$ is a Banach space. By Lemma 2.4 the norm $\|\cdot\cdot\cdot\|^{(k)}$ for $C_0^{k+\alpha}(D)$ is equivalent to the norm $|||\cdot|||$  of $C^{k+\alpha}(D)$. Note that $C_0^{k+\alpha}(D)$ is a closed subspace of $C^{k+\alpha}(D)$ and is therefore a Banch space with norm $\|\cdot\cdot\cdot\|^{(k)}$.
\end{proof}

\section{Cauchy-Green operator and high order derivative formula}
\subsection{Basic definitions and properties}
The operators are defined for integrable functions on $D$ and $C=\{|z|=R\}$ as follows:
\begin{eqnarray}
 Tf(z)&=&\frac{-1}{2\pi i}\int_D\frac{f(\zeta)d\bar{\zeta}\wedge d\zeta}{\zeta-z},\mbox{   }^2Tf(z)=\frac{-1}{2\pi i}\int_D\frac{f(\zeta)-f(z)}{(\zeta-z)^2}d\bar\zeta\wedge d\zeta\nonumber\\
Sf(z)&=&\frac{1}{2\pi i}\int_C\frac{f(\zeta)d\zeta}{\zeta-z},\hspace{1cm}
 S_bf(z)=\frac{1}{2\pi i}\int_C\frac{f(\zeta)d\bar\zeta}{\zeta-z},\nonumber\\
.\nonumber
\end{eqnarray}
Using polar coordinates, we see that $Tf$ is defined for continuous $f$, and $^2Tf$ for $C^\alpha(D)$. The operator $S$ is the familiar Cauchy integral.
We also define related operators: $\overline T, \overline S,\overline S_b$, and $^2\overline T$ as follows: $\overline T(f)=\overline {T(\bar f)}$, $\overline S(f)=\overline {S(\bar f)}$, $\overline S_b(f)=\overline {S_b(\bar f)}$ and $^2\overline T(f)=\overline {^2T(\bar f)}$ .
Importantly, we will use operators of $T^\mu\overline{T}^\nu$, which is a composition of $T,\overline T$. Here we assume $T^0=\mathrm{Id},\overline T^0=\mathrm{Id}$ and etc.
We note here that $S_b$ in this paper is the same as $\overline S$ in [NW]. The following estimate holds:
\begin{eqnarray}
 |Tf|\leq 4R|f|\nonumber.
\end{eqnarray}
More generally, if $\triangle$ is a bounded domain, then $T_\triangle f$ is defined for continuous $f$ on $\triangle$ by
$$T_\triangle f(z)=\frac{-1}{2\pi i}\int_\triangle\frac{f(\zeta)d\bar{\zeta}\wedge d\zeta}{\zeta-z}, \mbox{ }S_\triangle f(z)=\frac{1}{2\pi i}\int_{\partial\triangle}\frac{f(\zeta) d\zeta}{\zeta-z} .$$
We have
$$ |T_\triangle f|\leq 2 \mathrm{diam}(\triangle)|f|_\triangle.$$
The fundamental property between operators $T,S$ is the following ([NW], 6.1a).
\begin{lem}
If $f\in C^1(D)$, then
$$ T\bar\partial f=f-Sf     \mbox{         on         }\mbox{} \mathrm{Int}(D),$$
$$ \overline T\partial f=f-\overline Sf     \mbox{         on         }\mbox{} \mathrm{Int}(D).$$
\end{lem}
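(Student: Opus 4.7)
The plan is to derive the first identity from Stokes' theorem (the classical Cauchy--Pompeiu formula) and then obtain the second by complex conjugation. Fix a point $z\in\mathrm{Int}(D)$ and pick $\epsilon>0$ small enough that the closed disk $\overline{B_\epsilon(z)}$ lies in $\mathrm{Int}(D)$.

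I would first work with the $(1,0)$-form $\omega=\frac{f(\zeta)}{\zeta-z}\,d\zeta$ on the punctured region $U=D\setminus\overline{B_\epsilon(z)}$. Because $1/(\zeta-z)$ is holomorphic in $\zeta$ on $U$, a direct computation gives
$$d\omega=\bar\partial_\zeta\!\left(\frac{f(\zeta)}{\zeta-z}\right)d\bar\zeta\wedge d\zeta=\frac{\bar\partial f(\zeta)}{\zeta-z}\,d\bar\zeta\wedge d\zeta.$$
Applying Stokes' theorem on $U$, with $\partial D$ oriented counterclockwise and the induced orientation on $\partial B_\epsilon(z)$ being clockwise, yields
$$\int_{\partial D}\frac{f(\zeta)}{\zeta-z}\,d\zeta\;-\;\int_{\partial B_\epsilon(z)}\frac{f(\zeta)}{\zeta-z}\,d\zeta\;=\;\int_{U}\frac{\bar\partial f(\zeta)}{\zeta-z}\,d\bar\zeta\wedge d\zeta,$$
where the circle integral on the left is now understood as counterclockwise.

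Next I would send $\epsilon\to 0^+$. Parametrizing $\partial B_\epsilon(z)$ by $\zeta=z+\epsilon e^{i\theta}$, the small-circle integral tends to $2\pi i\,f(z)$ by continuity of $f$. For the area integral, $\bar\partial f$ is continuous on the compact set $D$ and $1/|\zeta-z|$ is locally integrable in the plane, so dominated convergence gives convergence to $\int_D\frac{\bar\partial f(\zeta)}{\zeta-z}\,d\bar\zeta\wedge d\zeta$. Dividing through by $2\pi i$ and rearranging produces
$$f(z)=\frac{1}{2\pi i}\int_{\partial D}\frac{f(\zeta)}{\zeta-z}\,d\zeta\;-\;\frac{1}{2\pi i}\int_D\frac{\bar\partial f(\zeta)}{\zeta-z}\,d\bar\zeta\wedge d\zeta=Sf(z)+T\bar\partial f(z),$$
which is the first identity.

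The second identity I would deduce at once by applying the first to $\bar f\in C^1(D)$ and conjugating. From $T\bar\partial\bar f=\bar f-S\bar f$, together with the defining relations $\overline T g=\overline{T\bar g}$, $\overline S g=\overline{S\bar g}$, and the elementary identity $\overline{\bar\partial\bar f}=\partial f$, one reads off $\overline T\partial f=f-\overline S f$. I do not expect any real difficulty in this proof; the only step that requires a moment of care is the interchange of limit and integral in the Stokes step, but this is routine given the $L^1_{\mathrm{loc}}$ control of the Cauchy kernel together with the boundedness of $\bar\partial f$ on $D$.
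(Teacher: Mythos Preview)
Your argument is correct: this is the classical Cauchy--Pompeiu formula, derived exactly as you outline via Stokes' theorem on $D\setminus\overline{B_\epsilon(z)}$ and a limit $\epsilon\to 0$, with the second identity following by conjugation. The paper does not supply its own proof of this lemma; it simply cites \cite{nw}, 6.1a, where the same Stokes-based derivation appears. So your proposal is in line with the referenced source, and there is nothing to add.
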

\begin{lem}
If $f\in C^{m+\alpha}(D)$ and $\mu+\nu=m$, then
$$\T^\nu\overline T^\mu(\partial^\mu\bar\partial^\nu f)=f-\sum_{j=0}^{\nu-1}T^j(S(\bar\partial^j f))-\sum_{j=0}^{\mu-1}T^\nu\overline
T^j(\overline S(\partial^j\bar\partial^\nu f)) \mbox{ on }\mathrm{Int}(D)$$
if $\mu, \nu\geq 1$; otherwise
$$T^m(\bar\partial^m g)=g-\sum_{j=0}^{m-1}T^j(S(\bar\partial^j g))$$
$$\overline T^m(\partial^m g)=g-\sum_{j=0}^{m-1}\overline T^j(\overline S(\partial^j g)).$$
\end{lem}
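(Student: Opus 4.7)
The plan is to obtain the formula by iterating Lemma 2.6 twice, first in the $\bar\partial$-direction and then in the $\partial$-direction. Since $f\in C^{m+\alpha}(D)$, every intermediate function $\partial^i\bar\partial^j f$ with $i+j\leq m-1$ lies in $C^{1+\alpha}(D)$, so Lemma 2.6 applies legally at each step. The operators $T,\overline T,S,\overline S$ are all linear, which lets us freely distribute them across sums produced by the iteration.

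First I would set up the $\bar\partial$-iteration. Lemma 2.6 applied to $f$ gives $f=T\bar\partial f + Sf$. Applying it to $\bar\partial f$ gives $\bar\partial f = T\bar\partial^2 f + S(\bar\partial f)$, and substituting yields $f = T^2\bar\partial^2 f + T(S(\bar\partial f)) + Sf$. Iterating this $\nu$ times (a straightforward induction on $\nu$) produces
$$ f \;=\; T^\nu \bar\partial^\nu f \;+\; \sum_{j=0}^{\nu-1} T^j\bigl(S(\bar\partial^j f)\bigr).$$
This already proves the $\mu=0$ case of the lemma, with $g=f$.

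Next I would run the analogous $\partial$-iteration on $\bar\partial^\nu f$. Applying the second identity of Lemma 2.6 inductively $\mu$ times yields
$$\bar\partial^\nu f \;=\; \overline T^\mu\bigl(\partial^\mu\bar\partial^\nu f\bigr) \;+\; \sum_{j=0}^{\mu-1}\overline T^j\bigl(\overline S(\partial^j\bar\partial^\nu f)\bigr),$$
which covers the $\nu=0$ case upon taking $g=f$. Plugging this expression into the first display and applying $T^\nu$ term-by-term (using linearity of $T$) gives
$$ f \;=\; T^\nu\overline T^\mu(\partial^\mu\bar\partial^\nu f) \;+\; \sum_{j=0}^{\mu-1} T^\nu\overline T^j\bigl(\overline S(\partial^j\bar\partial^\nu f)\bigr) \;+\; \sum_{j=0}^{\nu-1} T^j\bigl(S(\bar\partial^j f)\bigr),$$
and rearranging produces the claimed identity for $\mu,\nu\geq 1$.

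There is no real analytic obstacle here; the work is purely bookkeeping. The only point that requires a little care is verifying at each stage of the induction that the function to which Lemma 2.6 is being applied is genuinely in $C^1(\mathrm{Int}\,D)$: for the first iteration we need $\bar\partial^j f\in C^1$ for $j<\nu$, which holds because $j\leq \nu-1\leq m-1$; for the second iteration we need $\partial^j\bar\partial^\nu f\in C^1$ for $j<\mu$, which likewise holds. Once that is noted, the two inductions run independently and the result follows by combining them.
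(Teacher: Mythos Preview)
Your proof is correct and follows exactly the approach the paper intends: the paper's entire proof reads ``Just apply Lemma 3.1 repeatedly,'' and what you have written is precisely that repeated application spelled out---first iterating $T\bar\partial f=f-Sf$ to peel off $\bar\partial^\nu$, then iterating $\overline T\partial g=g-\overline S g$ on $\bar\partial^\nu f$ to peel off $\partial^\mu$, and substituting. Your regularity check that each intermediate $\partial^i\bar\partial^j f$ lies in $C^1$ is the only point requiring comment, and you handle it correctly; note that the lemma you invoke is Lemma~3.1 in the paper's numbering, not Lemma~2.6.
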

\begin{proof}
Just apply Lemma 3.1 repeatedly.
\end{proof}

\subsection{A new high order integral operator}
For $k\geq 0$, we define a new operator $^{k+2}Tf$ on $C^{k+\alpha}(D)$ by
$$^{k+2}Tf(z)=\frac{-(k+1)!}{2\pi i}\int_D\frac{f(\zeta)-P_{k}(\zeta,z)}{(\zeta-z)^{k+2}}d\bar\zeta\wedge d\zeta,$$
where $P_{k}(\zeta,z)$ is the Taylor expansion of $f$ at $z$ of degree $k$. Namely
$$P_{k}(\zeta,z)=\sum_{l=0}^k\frac{1}{l!}\sum_{i+j=l}\partial^i\bar\partial^j f(z)(\zeta-z)^i{(\bar \zeta-\bar z)}^j.$$
We also define $^{k+2}\overline Tf=\overline {^{k+2}T(\bar f)}$.
More generally, if $\Delta$ is a bounded domain, then $^{k+2}T_\Delta$ is  defined as
$$^{k+2}T_\Delta f(z)=\frac{-(k+1)!}{2\pi i}\int_\Delta\frac{f(\zeta)-P_{k}(\zeta,z)}{(\zeta-z)^{k+2}}d\bar\zeta\wedge d\zeta.$$
Note the case $k=0$ was defined in [NW]. The following is well-known and classical(see [NW], [V]).
\begin{lem}
Let $f\in C^\alpha(D)$. Then $Tf\in C^{1+\alpha}(D), ^2Tf\in C^\alpha(D)$. Moreover
$$\bar\partial Tf=f,\mbox{  }\partial Tf=^2Tf,$$
$$H_\alpha[^2Tf]\leq C H_\alpha[f],$$
If $f\in C^{k+\alpha}(D)(k\geq 0)$, then $Tf\in f\in C^{k+1+\alpha}(D)$.
\end{lem}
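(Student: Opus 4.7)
The plan is to verify each of the four assertions in turn, following the classical Vekua--Pompeiu strategy, since the lemma is essentially a list of standard mapping properties of the Cauchy--Green operator.

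First I would establish $\bar\partial Tf = f$. The cleanest route is to use the translation $w=\zeta-z$, which rewrites
\[
Tf(z)=\frac{-1}{2\pi i}\int_{D-z}\frac{f(w+z)}{w}\,d\bar w\wedge dw,
\]
so that the $z$-dependence sits inside $f$. For $f\in C^1$ one can differentiate under the integral and apply the Cauchy--Pompeiu representation to recover $f$. For general $f\in C^\alpha$ I would either approximate by $C^1$ functions in the $C^\alpha$ topology and pass to the limit using the continuity of $T$ from $C^\alpha$ into $C^\alpha$, or else argue distributionally from the fundamental solution identity $\bar\partial(1/(\pi z))=\delta_0$.

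Second, for $\partial Tf = {}^2Tf$ I would start from the principal value formulation: for $\varepsilon>0$ let $D_\varepsilon = D\setminus B_\varepsilon(z)$. On $D_\varepsilon$ the integrand of $Tf$ is smooth in $z$, so
\[
\partial_z\frac{-1}{2\pi i}\int_{D_\varepsilon}\frac{f(\zeta)}{\zeta-z}d\bar\zeta\wedge d\zeta=\frac{-1}{2\pi i}\int_{D_\varepsilon}\frac{f(\zeta)}{(\zeta-z)^2}d\bar\zeta\wedge d\zeta + (\text{boundary term on }\partial B_\varepsilon).
\]
The boundary term tends to zero by continuity of $f$, and after the subtraction trick $f(\zeta)=(f(\zeta)-f(z))+f(z)$ (the $f(z)$-piece is an exact form $\bar\partial_\zeta(1/(\zeta-z))$ whose integral over a disk without a puncture vanishes by Stokes in the limit), the remaining integral equals ${}^2Tf(z)$.

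Third, and this is the technical heart, is the H\"older estimate $H_\alpha[{}^2Tf]\le C\,H_\alpha[f]$. For $z,z'\in D$ set $h=|z-z'|$ and split the defining integral for ${}^2Tf(z)-{}^2Tf(z')$ into a near piece over $B_{2h}(z)$ and a far piece over $D\setminus B_{2h}(z)$. On the near piece use $|f(\zeta)-f(z)|\le H_\alpha[f]|\zeta-z|^\alpha$ against the $|\zeta-z|^{-2}$ and $|\zeta-z'|^{-2}$ kernels separately, each contributing $Ch^\alpha H_\alpha[f]$. On the far piece, write the kernel difference $\frac{1}{(\zeta-z)^2}-\frac{1}{(\zeta-z')^2}=O(h/|\zeta-z|^3)$ and pair it with $f(\zeta)-f(z)$ to obtain an integrable bound $\int_{2h}^{R} h\,r^{\alpha-3}\,r\,dr\le Ch^\alpha$, again times $H_\alpha[f]$. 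The bookkeeping of the $-f(z)$ versus $-f(z')$ subtraction must be handled by inserting $\pm(f(z)-f(z'))/(\zeta-z')^2$-type corrections that are themselves $O(h^\alpha)$ by the same splitting.

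Fourth, the regularity lift $f\in C^{k+\alpha}\Rightarrow Tf\in C^{k+1+\alpha}$ is then an induction: the base case $k=0$ follows from the first three steps, since $\bar\partial Tf=f\in C^\alpha$ and $\partial Tf={}^2Tf\in C^\alpha$ together give $Tf\in C^{1+\alpha}$. For the inductive step, apply $\partial$ or $\bar\partial$ to the identities $\bar\partial Tf=f$, $\partial Tf={}^2Tf$ together with the commutation of $T$ with differentiation (modulo boundary terms which are smooth in $\mathrm{Int}(D)$), reducing the claim at level $k$ to the claim at level $k-1$ applied to derivatives of $f$. The single genuine obstacle is the H\"older bound for ${}^2T$ in Step three; the rest is either a formal computation (Steps one and two) or an induction driven by that estimate (Step four).
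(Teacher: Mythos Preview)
The paper does not prove this lemma: it is introduced with ``The following is well-known and classical (see [NW], [V])'' and no argument is given. Your sketch is precisely the classical Vekua/Nijenhuis--Woolf route those citations point to, so there is nothing to compare against --- you are filling in what the paper deliberately omits.

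Two small corrections to the sketch itself. In Step~2 the constant piece is exact via $\partial_\zeta$, not $\bar\partial_\zeta$: one has $\tfrac{1}{(\zeta-z)^2}\,d\bar\zeta\wedge d\zeta = d\bigl(\tfrac{-1}{\zeta-z}\,d\bar\zeta\bigr)$, and the resulting boundary integrals vanish by a residue computation (this is the $k=0$ instance of what the paper later records as Lemma~3.8). In Step~4, the ``commutation of $T$ with differentiation modulo boundary terms'' is exactly the identity ${}^2Tf = T(\partial f) - S_b f$ (the paper's Lemma~3.10); the $S_b$ contributions are holomorphic in $\mathrm{Int}(D)$, but to obtain $C^{k+1+\alpha}$ on the \emph{closed} disk you also need H\"older control of $\partial^j S_b f$ up to $\partial D$, which is a separate estimate (isolated in the paper as Lemma~3.12). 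Your induction goes through once that ingredient is in hand.
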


We prove the following extension of Lemma 3.3 for the operator $^{k+2}Tf$, which is one of key tools in the proof of the results that follow.
\begin{thm}
Let $f\in C^{k+\alpha}(D) (k\geq 0)$. Then
$$\partial^i\bar\partial^j Tf=\partial^i\bar\partial^{j-1}f,$$
if $j\geq 1, i+j\leq k+1;$ otherwise
$$\partial^{k+1} Tf=^{k+2}Tf.
$$

\end{thm}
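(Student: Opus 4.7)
The theorem has two parts. For the first part ($j\ge 1$, $i+j\le k+1$), I would use that Lemma 3.3 yields $Tf\in C^{k+1+\alpha}(D)$, so all mixed partials of $Tf$ up to order $k+1$ exist, are continuous, and commute. Combining this commutativity with $\bar\partial Tf=f$ from Lemma 3.3 gives
\[
\partial^i\bar\partial^j Tf \;=\; \partial^i\bar\partial^{j-1}\bigl(\bar\partial Tf\bigr) \;=\; \partial^i\bar\partial^{j-1}f,
\]
which is the stated identity.

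For the second part, $\partial^{k+1}Tf={}^{k+2}Tf$, I would proceed by induction on $k$. The base case $k=0$ is precisely Lemma 3.3. For the inductive step from $k-1$ to $k$, the hypothesis provides $\partial^k Tg={}^{k+1}Tg$ for every $g\in C^{(k-1)+\alpha}(D)$; applying it to $f\in C^{k+\alpha}(D)\subset C^{(k-1)+\alpha}(D)$ and then differentiating once more in $z$, the problem reduces to the pointwise identity
\[
\partial\bigl({}^{k+1}Tf\bigr)(z) \;=\; {}^{k+2}Tf(z) \qquad (z\in\mathrm{Int}(D)).
\]

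To verify this reduced identity, I would differentiate the integral defining ${}^{k+1}Tf$ under the integral sign via the usual regularization: excise $B_\epsilon(z)$, differentiate the now-smooth truncated integral, and let $\epsilon\to 0$. Three contributions appear: (i) $\partial_z$ applied to the factor $(\zeta-z)^{-(k+1)}$ produces, after multiplication by $k+1$, the ${}^{k+2}T$-type integrand but with $P_{k-1}(\zeta,z)$ in place of $P_k(\zeta,z)$; (ii) $\partial_z$ applied to $P_{k-1}(\zeta,z)$ contributes a correction integrand of order $(\zeta-z)^{-(k+1)}$; and (iii) a boundary piece over $\partial B_\epsilon(z)$ from the moving cut-off. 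Writing $P_k - P_{k-1}=\tfrac{1}{k!}\sum_{i+j=k}\partial^i\bar\partial^j f(z)(\zeta-z)^i(\bar\zeta-\bar z)^j$ and exploiting the angular orthogonality $\int_0^{2\pi}e^{im\theta}\,d\theta=0$ for integer $m\ne 0$, I expect the terms (ii) and (iii) to combine in the limit $\epsilon\to 0$ so as to cancel the discrepancy in (i), promoting $P_{k-1}$ to $P_k$ and yielding exactly ${}^{k+2}Tf(z)$.

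The main obstacle is the combinatorial bookkeeping of this cancellation: $\partial_z P_{k-1}(\zeta,z)$ mixes the shift $\partial^i\bar\partial^j\mapsto\partial^{i+1}\bar\partial^j$ in the Taylor coefficients with polynomial factors in $(\zeta-z),(\bar\zeta-\bar z)$, and these must be matched term-by-term with the angular residues on $\partial B_\epsilon(z)$. Once the cancellation is verified at the integrand level, the $\epsilon\to 0$ limit is controlled by the Hölder remainder bound from Lemma 2.2, which makes the integrand of size $|\zeta-z|^{\alpha-2}$ after Taylor subtraction and hence integrable in the plane.
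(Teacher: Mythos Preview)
Your overall strategy matches the paper's: the first identity is immediate from $\bar\partial Tf=f$ together with $Tf\in C^{k+1+\alpha}$, and the second reduces (by the implicit induction you spell out) to the pointwise identity $\partial({}^{k+1}Tf)={}^{k+2}Tf$ on $\mathrm{Int}(D)$. Where you diverge from the paper is in how this last identity is established.

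The paper does \emph{not} differentiate under the integral sign with an $\epsilon$-excision. Instead it shows directly that ${}^{k+1}Tf$ has a total differential by estimating the increment ${}^{k+1}Tf(z)-{}^{k+1}Tf(z')$. It introduces a small disk $\delta$ of radius $\rho=|z-z'|$ centered at $(z+z')/2$, containing \emph{both} points, and splits $D=\delta\cup(D\setminus\delta)$. On $\delta$, the Taylor remainder (Lemma 2.2) combined with the explicit evaluations of Corollary 3.6 and Lemma 3.7 produces the term $\partial^k f(z)(\bar z-\bar z')$ plus $O(\rho^{1+\alpha})$. On $D\setminus\delta$, the polynomial pieces are annihilated by Lemma 3.8 (the vanishing of $\int_{D\setminus\delta}(\bar\zeta-\bar s)^n(\zeta-s)^{-m}\,d\bar\zeta\wedge d\zeta$ for $m\ge 2$, any $s\in\delta$), and a further line integral in an auxiliary variable $w$ running from $z'$ to $z$ isolates ${}^{k+2}Tf(z)(z-z')$. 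The ``angular orthogonality'' you invoke is precisely what Lemmas~3.5--3.8 encode, so the cancellation you anticipate is genuine; the paper just packages it as vanishing lemmas for polynomial integrands rather than as boundary residues on a shrinking $\partial B_\epsilon(z)$.

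The trade-off: your regularization route is the standard potential-theory reflex and would work once the bookkeeping you flag is done, but it forces you to track a moving-domain boundary contribution, which is exactly the place where the combinatorics is unpleasant. The paper's two-point increment, with a disk containing both $z$ and $z'$, sidesteps that moving-boundary term entirely, at the cost of isolating Lemmas~3.5--3.8 as separate tools. A side benefit of the paper's route is that $\bar\partial({}^{k+1}Tf)=\partial^k f$ falls out of the same computation, so differentiability of ${}^{k+1}Tf$ is obtained from scratch rather than imported from the regularity clause of Lemma~3.3.
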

Before giving a proof of Theorem 3.4, we need two lemma on compution of integrals.
\begin{lem}
If $\varphi(\zeta) $ is holomorphic in $\co$, then
$$\int_\triangle\frac{\overline{\phi(\zeta)}}{\zeta-w}d\bar\zeta\wedge d\zeta$$
is anti-holomorphic in $w\in \mathrm{Int}(\triangle)$, where $\triangle=\{|\zeta-z_0|\leq r\}.$
\end{lem}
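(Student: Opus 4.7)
The plan is to derive an explicit closed-form expression for
$$F(w):=\int_\triangle\frac{\overline{\varphi(\zeta)}}{\zeta-w}\,d\bar\zeta\wedge d\zeta$$
and read off its anti-holomorphicity in $w$ from the formula. Since $\varphi$ is entire, it admits an entire primitive $\Phi$ with $\Phi'=\varphi$, and then $\overline\Phi\in C^1(\overline{\triangle})$ satisfies $\bar\partial_\zeta\overline{\Phi(\zeta)}=\overline{\varphi(\zeta)}$. Applying the Cauchy--Pompeiu identity of Lemma 3.1 (with $\triangle$ in place of $D$) to $\overline\Phi$ on $\triangle$ yields
$$F(w)=\int_{\partial\triangle}\frac{\overline{\Phi(\zeta)}}{\zeta-w}\,d\zeta\;-\;2\pi i\,\overline{\Phi(w)}.$$
The second term is plainly anti-holomorphic in $w$, while the boundary integral is a Cauchy integral of a continuous function, hence holomorphic in $w\in\mathrm{Int}(\triangle)$; so it suffices to show this boundary integral is \emph{constant} in $w$.

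To establish constancy I exploit the reflection identity on $\partial\triangle$: whenever $|\zeta-z_0|=r$ one has $\bar\zeta=\bar z_0+r^2/(\zeta-z_0)$. Writing $\Phi(\lambda)=\sum b_n\lambda^n$ and $\overline{\Phi}^{\ast}(\lambda):=\sum\overline{b_n}\lambda^n$, I define
$$\tilde\Phi(\zeta):=\overline{\Phi}^{\ast}\!\left(\bar z_0+\frac{r^2}{\zeta-z_0}\right),$$
which, being the composition of the entire function $\overline{\Phi}^{\ast}$ with a rational function holomorphic off $\{z_0\}$, is holomorphic on $\co\setminus\{z_0\}$ and coincides with $\overline{\Phi(\zeta)}$ on $\partial\triangle$. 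Because $|r^2/(\zeta-z_0)|\le r$ whenever $|\zeta-z_0|\ge r$, the argument of $\overline{\Phi}^{\ast}$ stays in the compact disk $\{|\lambda-\bar z_0|\le r\}$ as $\zeta$ ranges over the closed exterior of $\triangle$, so $\tilde\Phi$ is bounded on that exterior and $\tilde\Phi(\zeta)\to\overline{\Phi(z_0)}$ as $\zeta\to\infty$.

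Replacing $\overline\Phi$ by $\tilde\Phi$ on the boundary and applying Cauchy's theorem to $\tilde\Phi(\zeta)/(\zeta-w)$ (holomorphic in $\{|\zeta-z_0|>r\}$ for $w$ fixed inside $\triangle$), I deform the contour outward:
$$\int_{\partial\triangle}\frac{\overline{\Phi(\zeta)}}{\zeta-w}\,d\zeta=\int_{|\zeta-z_0|=R'}\frac{\tilde\Phi(\zeta)}{\zeta-w}\,d\zeta\qquad(R'>r).$$
Sending $R'\to\infty$ and using $\tilde\Phi(\zeta)/(\zeta-w)\sim\overline{\Phi(z_0)}/\zeta$ at infinity gives the constant value $2\pi i\,\overline{\Phi(z_0)}$, independent of $w$. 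Combining with the Cauchy--Pompeiu step produces the closed form
$$F(w)=2\pi i\bigl(\,\overline{\Phi(z_0)}-\overline{\Phi(w)}\,\bigr),$$
which is manifestly anti-holomorphic in $w\in\mathrm{Int}(\triangle)$. The main obstacle is justifying the reflection identity $\tilde\Phi=\overline\Phi$ on $\partial\triangle$ for general entire (not merely polynomial) $\Phi$; I sidestep any series-convergence worry by defining $\tilde\Phi$ intrinsically as the composition $\overline{\Phi}^{\ast}\circ\sigma$ with $\sigma(\zeta)=\bar z_0+r^2/(\zeta-z_0)$, so that holomorphy off $z_0$, boundedness on the exterior, and the limit at $\infty$ follow from composition rather than from termwise manipulation.
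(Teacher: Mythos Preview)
Your proof is correct, and the closed form $F(w)=2\pi i\bigl(\overline{\Phi(z_0)}-\overline{\Phi(w)}\bigr)$ agrees with the paper's Corollary~3.6 once one sums the monomial computations there. Both arguments rest on the same core idea: write $\overline{\varphi}=\bar\partial(\overline{\Phi})$ for an entire primitive $\Phi$ and apply the Cauchy--Pompeiu identity (Lemma~3.1) to reduce the area integral to a boundary Cauchy integral plus the anti-holomorphic term $-2\pi i\,\overline{\Phi(w)}$.

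The execution differs. The paper first expands $\varphi(\zeta)=\sum a_l(\zeta-z_0)^l$ and treats each monomial separately, applying Lemma~3.1 to $(\bar\zeta-\bar z_0)^{l+1}$ and then killing the boundary term $S_\triangle\bigl((\bar\zeta-\bar z_0)^{l+1}\bigr)$ by a direct residue calculation inside $\triangle$ (using $(\bar\zeta-\bar z_0)^{l+1}=r^{2(l+1)}/(\zeta-z_0)^{l+1}$ on $\partial\triangle$). You instead work globally with a single primitive $\Phi$ and handle the boundary integral by Schwarz reflection: replace $\overline{\Phi}$ on $\partial\triangle$ by the holomorphic $\tilde\Phi=\overline{\Phi}^{\ast}\!\circ\sigma$, then deform the contour to infinity to read off the constant $2\pi i\,\overline{\Phi(z_0)}$. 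Your route avoids the implicit interchange of summation and integration in the paper's reduction to monomials and yields the closed formula in one stroke; the paper's route is shorter for each monomial and keeps the residue bookkeeping entirely inside the disk. Either way, the substance is the vanishing (or constancy) of $S_\triangle(\overline{\Phi})$, and both arguments establish it via the reflection identity $\bar\zeta-\bar z_0=r^2/(\zeta-z_0)$ on $\partial\triangle$.
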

\begin{proof} Writing $\phi(\zeta)=\sum a_l(\zeta-z_0)^l$, we only have to prove the lemma for $\phi(\zeta)=(\zeta-z_0)^l$. Indeed,
\begin{eqnarray}
\int_{\triangle} \frac{(\bar\zeta-\bar z_0)^l}{\zeta-w}d\bar\zeta\wedge d\zeta &=&{\frac{-2\pi i}{(l+1)}}T_{\triangle}(\bar\partial (\bar\zeta-\bar z_0)^{l+1})(w)\nonumber\\
&=&{\frac{-2\pi i}{(l+1)}}\{(\bar w-\bar z_0)^{l+1}-S_\triangle((\bar\zeta-\bar z_0)^{l+1})(w)\},\nonumber
\end{eqnarray}
where we have
\begin{eqnarray}
S_\triangle((\bar\zeta -\bar z_0)^{l+1})(w) &=& \frac{1}{2\pi i}\int_{|\zeta-z_0|=r}\frac{(\bar\zeta-\bar z_0)^{l+1}}{\zeta-w}d\zeta\nonumber\\
&=& \frac{r^{2(l+1)}}{2\pi i}\int_{|\zeta-z_0|=r}\frac{1}{(\zeta-z_0)^{l+1}(\zeta-w)}d\zeta\nonumber\\
&=&0,\nonumber
\end{eqnarray}
where in the last equality, we have used the residue theorem to get zero integral.
\end{proof}
\begin{cor}It holds for $l\geq 0$,
$$\int_{\triangle} \frac{(\bar\zeta-\bar z_0)^l}{\zeta-w}d\bar\zeta\wedge d\zeta={\frac{-2\pi i}{(l+1)}}(\bar w-\bar z_0)^{l+1}$$
where $\triangle=\{|\zeta-z_0|\leq r\}.$
\end{cor}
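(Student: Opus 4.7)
The corollary is essentially an immediate extraction of the intermediate identity already established inside the proof of Lemma 3.5. My plan is to specialize that argument to $\phi(\zeta)=(\zeta-z_0)^l$ (or equivalently invoke Lemma 3.5 with this choice) and make the closed-form value explicit.

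Concretely, I would first write $(\bar\zeta-\bar z_0)^l=\frac{1}{l+1}\bar\partial\bigl[(\bar\zeta-\bar z_0)^{l+1}\bigr]$, so that the integral becomes
$$\int_{\triangle}\frac{(\bar\zeta-\bar z_0)^l}{\zeta-w}\,d\bar\zeta\wedge d\zeta=\frac{-2\pi i}{l+1}\,T_\triangle\bigl(\bar\partial\bigl[(\bar\zeta-\bar z_0)^{l+1}\bigr]\bigr)(w).$$
Next I would apply Lemma 3.1 on the disk $\triangle$ (it holds on any bounded domain with smooth boundary) to obtain
$$T_\triangle\bigl(\bar\partial\bigl[(\bar\zeta-\bar z_0)^{l+1}\bigr]\bigr)(w)=(\bar w-\bar z_0)^{l+1}-S_\triangle\bigl((\bar\zeta-\bar z_0)^{l+1}\bigr)(w).$$

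The remaining step is to verify that the Cauchy-type boundary integral $S_\triangle((\bar\zeta-\bar z_0)^{l+1})(w)$ vanishes for $w$ in the interior of $\triangle$. On the boundary circle $|\zeta-z_0|=r$ the identity $\bar\zeta-\bar z_0=r^2/(\zeta-z_0)$ turns the integrand into $r^{2(l+1)}/[(\zeta-z_0)^{l+1}(\zeta-w)]$, a meromorphic function with poles only at $\zeta=z_0$ (order $l+1$) and $\zeta=w$ (simple). A short residue computation using the expansion $1/(\zeta-w)=-\sum_{k\ge 0}(\zeta-z_0)^k/(w-z_0)^{k+1}$ shows that the residue at $\zeta=z_0$ is $-r^{2(l+1)}/(w-z_0)^{l+1}$, which exactly cancels the residue $r^{2(l+1)}/(w-z_0)^{l+1}$ at $\zeta=w$. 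Combining this with the previous display yields the stated formula.

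The argument is entirely mechanical, so I do not anticipate any real obstacle; the only mildly delicate check is the cancellation of the two residues at $z_0$ and $w$, which is the Laurent-series computation just indicated.
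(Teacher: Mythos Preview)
Your proposal is correct and follows essentially the same route as the paper: the corollary is simply the closed-form value already computed inside the proof of Lemma~3.5, obtained by writing $(\bar\zeta-\bar z_0)^l=\tfrac{1}{l+1}\bar\partial(\bar\zeta-\bar z_0)^{l+1}$, applying the Cauchy--Green identity $T_\triangle\bar\partial f=f-S_\triangle f$, and showing the boundary term vanishes via the residue theorem. The only difference is that you spell out the cancellation of the residues at $z_0$ and $w$ explicitly, whereas the paper simply asserts ``by the residue theorem to get zero integral''; this is an elaboration, not a different argument.
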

\begin{lem}
If $l\geq 1$, then
$$\int_\triangle \frac{(\bar\zeta-\bar z)^l}{(\zeta-z)^{l+1}}d\bar\zeta\wedge d\zeta=0$$
for $z\in \mathrm{Int(\triangle)}.$
\end{lem}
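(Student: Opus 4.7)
The plan is to recognize the integrand as a $\bar\partial_\zeta$-exact form and apply Stokes' theorem after excising a small disk around the singularity at $\zeta=z$. Since $(\zeta-z)^{l+1}$ is holomorphic in $\zeta$, one has
$$\frac{(\bar\zeta-\bar z)^l}{(\zeta-z)^{l+1}}=\bar\partial_\zeta F,\qquad F(\zeta)=\frac{(\bar\zeta-\bar z)^{l+1}}{(l+1)(\zeta-z)^{l+1}}.$$
Because $z\in\mathrm{Int}(\triangle)$, pick $\epsilon>0$ with $\overline{B_\epsilon(z)}\subset\triangle$; on $\triangle\setminus B_\epsilon(z)$ the function $F$ is smooth, so Stokes' theorem (applied to $F\,d\zeta$, whose exterior derivative is $\bar\partial_\zeta F\,d\bar\zeta\wedge d\zeta$) yields
$$\int_{\triangle\setminus B_\epsilon(z)}\frac{(\bar\zeta-\bar z)^l}{(\zeta-z)^{l+1}}\,d\bar\zeta\wedge d\zeta=\int_{\partial\triangle}F\,d\zeta-\int_{\partial B_\epsilon(z)}F\,d\zeta.$$
The integrand is absolutely integrable on $\triangle$ (its modulus equals $1/|\zeta-z|$, which is integrable in two dimensions), so the left side tends to the lemma's integral as $\epsilon\to 0$.

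The inner boundary contribution is immediate. Parametrizing $\zeta=z+\epsilon e^{i\theta}$ one finds $F\,d\zeta=\frac{i\epsilon}{l+1}e^{-i(2l+1)\theta}\,d\theta$, whose integral over $[0,2\pi]$ vanishes for every $l\geq 0$ because $2l+1\neq 0$.

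The substantive step is the outer boundary integral $\int_{\partial\triangle}F\,d\zeta$. On the circle $|\zeta-z_0|=r$ the identity $\bar\zeta-\bar z_0=r^2/(\zeta-z_0)$ converts $F|_{\partial\triangle}$ into a rational function of $\zeta$ alone whose only finite poles lie at $\zeta=z_0$ and $\zeta=z$, both in $\mathrm{Int}(\triangle)$. Since this rational function decays at infinity like $\zeta^{-(l+1)}$ with $l+1\geq 2$, the coefficient of $\zeta^{-1}$ in its Laurent expansion at $\infty$ vanishes, so its residue at $\infty$ is zero; hence the two finite residues sum to zero and the residue theorem gives $\int_{\partial\triangle}F\,d\zeta=0$. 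This decay step is exactly where the hypothesis $l\geq 1$ is used, and I expect it to be the main obstacle to execute cleanly (one could alternatively expand $(\bar\zeta-\bar z)^{l+1}$ by the binomial theorem and verify term by term that the residues at $z$ and $z_0$ cancel, but the residue-at-infinity shortcut is slicker). Combining the two boundary evaluations and letting $\epsilon\to 0$ completes the proof.
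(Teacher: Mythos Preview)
Your argument is correct and takes a genuinely different route from the paper. The paper does not integrate by parts; instead it splits the integral as $\int_{\triangle\setminus\epsilon(z)}+\int_{\epsilon(z)}$ and, on the annulus, rewrites the integrand as $\frac{1}{l!}\frac{d^l}{dw^l}\big|_{w=z}$ of $\int\frac{(\bar\zeta-\bar z)^l}{\zeta-w}\,d\bar\zeta\wedge d\zeta$ over $\triangle$ and over $\epsilon(z)$ separately. Each of these auxiliary integrals is \emph{anti-holomorphic} in $w$ by the paper's Lemma~3.5, so the $l$th holomorphic derivative in $w$ vanishes (this is where $l\ge 1$ enters), and the remaining $\int_{\epsilon(z)}$ piece tends to $0$ with $\epsilon$. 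Your Stokes/residue-at-infinity approach is more self-contained and avoids invoking Lemma~3.5, at the cost of the extra step of converting the outer boundary integral to a rational function via $\bar\zeta-\bar z_0=r^2/(\zeta-z_0)$ and controlling the decay at $\infty$; the paper's approach, by contrast, folds the work into the already-proved anti-holomorphicity lemma and thereby reuses machinery built earlier in the section. Both arguments locate the hypothesis $l\ge 1$ in essentially the same place (killing a potential residue/first-order term), just packaged differently.
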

\begin{proof}
We notice that $\frac{(\bar\zeta-\bar z)^l}{(\zeta-z)^{l+1}}$ is integrable in $\triangle$ using polar coordinate at $z$. Let $\epsilon(z)$ be the disk of radius $\epsilon$ and center at $z$ so that $\epsilon(z)\subset\triangle$.
Now we have
\begin{eqnarray}
\int_\triangle\frac{(\bar\zeta-\bar z)^l}{(\zeta-z)^{l+1}}d\bar\zeta\wedge d\zeta &=&
\int_{\triangle\setminus \epsilon(z)} \frac{(\bar\zeta-\bar z)^l}{(\zeta-z)^{l+1}}d\bar\zeta\wedge d\zeta+\int_{\epsilon(z)} \frac{(\bar\zeta-\bar z)^l}{(\zeta-z)^{l+1}}d\bar\zeta\wedge d\zeta\nonumber\\
&=&
\frac{1}{l!}\frac{d^l}{dw^l}\int_{\triangle\setminus \epsilon(z)} \frac{(\bar\zeta-\bar z)^l}{\zeta-w}d\bar\zeta\wedge d\zeta\bigg |_{w=z}+\int_{\epsilon(z)} \frac{(\bar\zeta-\bar z)^l}{(\zeta-z)^{l+1}}d\bar\zeta\wedge d\zeta\nonumber\\
&=& I_1|_{w=z}+I_2\nonumber
\end{eqnarray}
We have
$$I_1=\frac{1}{l!}\frac{d^l}{dw^l}\left\{\int_{\triangle} \frac{(\bar\zeta-\bar z)^l}{\zeta-w}d\bar\zeta\wedge d\zeta-\int_{\epsilon(z)} \frac{(\bar\zeta-\bar z)^l}{\zeta-w}d\bar\zeta\wedge d\zeta\right\}$$
where $w\in \epsilon(z)$. By Lemma 3.5, $I_1=0$. $\lim_{\epsilon\to 0}I_2=0$ is obvious. This proof is complete.
\end{proof}
Now we are ready to give proof of Therem 3.4.
\begin{proof} It suffices to show that if $f\in C^{k+\alpha}(D)$, then $^{k+1}Tf$ has a total differential, which means we must show that there are numbers $A$ and $B$, depending on $z$, such that
$$^{k+1}Tf(z)-^{k+1}Tf(z')=A(z-z')+B(\bar z-\bar z')+\varepsilon(z,z')\cdot|z-z'|$$
where $\varepsilon (z,z')\to 0$ as $z'\to z$. It is claimed that $A=^{k+2}Tf(z)$ and $B=\partial^k f(z)$. To this end, denote the left side in the above by $I$, and restrict $z'$ so that $\rho=|z-z'|<\frac{1}{2}(R-|z|)$. Let $\delta$ be the disk of radius $\rho$ and center $(z+z')/2$. Notice $\delta\subset D$. Then
$$I=^{k+1}T_\delta f(z)- ^{k+1}T_\delta f(z')-\frac{k!}{2\pi i} \int_{D\setminus\delta}\bigg\{\frac{f(\zeta)-P_{k-1}(\zeta,z)}{(\zeta-z)^{k+1}}-\frac{f(\zeta)-P_{k-1}(\zeta,z')}{(\zeta-z')^{k+1}}\bigg\}d\bar\zeta\wedge d\zeta.$$
We write by Lemma 2.2 that
$$f(\zeta)=P_{k-1}(\zeta, z)+\frac{1}{k!}\sum_{i+j=k}\partial^i\bar\partial^j f(z)(\zeta-z)^i(\bar\zeta-\bar z)^j+E_{k}(\zeta, z)$$
where
$$|E_{k}(\zeta, z)|\leq
\frac{1}{k!}\bigg\{\sum_{i+j=k}H_\alpha[\partial^i\bar\partial^j f]\bigg\}|\zeta-z|^{k+\alpha}.$$

We now consider the terms in order. First we need the following, using Lemma 3.6 and Corollary 3.5,
\begin{eqnarray}
^{k+1}T_\delta f(z)&=&\frac{-k!}{2\pi i}\int_\delta\frac{f(\zeta)-P_{k-1}(\zeta,z)}{(\zeta-z)^{k+1}}d\bar\zeta\wedge d\zeta\nonumber\\
&=&\frac{-1}{2\pi i }\sum_{i+j=k}\partial^i\bar\partial^j f(z)\int_\delta\frac{(\zeta-z)^i(\bar\zeta-\bar z)^j}{(\zeta-z)^{k+1}}d\bar\zeta\wedge d\zeta\nonumber\\
&+&\frac{-k!}{2\pi i}\int_\delta\frac{E_{k}(\zeta,z)}{(\zeta-z)^{k+1}}d\bar\zeta\wedge d\zeta\nonumber\\
&=&\partial^k f(z)\frac{-1}{2\pi i}\int_\delta\frac{1}{(\zeta-z)}d\bar\zeta\wedge d\zeta+\frac{-k}{2\pi i}\int_\delta\frac{E_{k}(\zeta,z)}{(\zeta-z)^{k+1}}d\bar\zeta\wedge d\zeta\nonumber\\
&=&\partial^k f(z)(\bar z-\frac{\bar z+\bar z'}{2})+\frac{-k}{2\pi i}\int_\delta\frac{E_{k}(\zeta,z)}{(\zeta-z)^{k+1}}d\bar\zeta\wedge d\zeta\nonumber\\
&=&\frac{1}{2}\partial^k f(z)(\bar z-\bar z')+I_3.\nonumber
\end{eqnarray}
Similarly we have
\begin{eqnarray}
^{k+1}T_\delta f(z')&=&\frac{1}{2}\partial^k f(z')(\bar z'-\bar z)+\frac{-k}{2\pi i}\int_\delta\frac{E_{k}(\zeta,z')}{(\zeta-z')^{k+1}}d\bar\zeta\wedge d\zeta\nonumber\\
&=&\frac{1}{2}\partial^k f(z')(\bar z'-\bar z)+I_3'.\nonumber
\end{eqnarray}
Now we estimate $I_3, I_3'$. It follows
$$|I_3|=\bigg|\frac{-k}{2\pi i}\int_\delta\frac{E_{k}(\zeta,z)}{(\zeta-z)^{k+1}}d\bar\zeta\wedge d\zeta\bigg|\leq C\sum_{i+j=k}H_\alpha[\partial^i\bar\partial^j f]\int_0^{2\pi}\int_0^{r_1(\theta)}\frac{r^{k+\alpha}2rdrd\theta}{r^{k+1}}$$
where $r_1(\theta)$ is the distance from $z$ to the boundary of $\delta$ in direction $\theta$; note $r_1(\theta)\leq2\rho$. Continuing the computation, we have
$$|I_3|\leq C\sum_{i+j=k}H_\alpha[\partial^i\bar\partial^j f](2\rho)^{1+\alpha}$$
which approaches $0$ as $\rho\to 0$. The same estimate holds true for $I_3'$. Thus we have
\begin{eqnarray}
&&|^{k+1}T_\delta f(z)- ^{k+1}T_\delta f(z')-\partial^kf(z)(\bar z-\bar z')|\nonumber\\
&\leq& |I_3|+|I_3'|+\bigg|\frac{1}{2}\partial^k f(z)(\bar z-\bar z')-\frac{1}{2}\partial^k f(z')(\bar z'-\bar z)-\partial^kf(z)(\bar z-\bar z')\bigg|\nonumber\\
&\leq& |I_3|+|I_3'|+\frac{1}{2}H_\alpha[\partial^k f]|z-z'|^{1+\alpha}.\nonumber
\end{eqnarray}
On the other hand, to continue to estimate, we have
\begin{eqnarray}
&&\int_{D\setminus\delta}\bigg\{\frac{f(\zeta)-P_{k-1}(\zeta,z)}{(\zeta-z)^{k+1}}-\frac{f(\zeta)-P_{k-1}(\zeta,z')}{(\zeta-z')^{k+1}}\bigg\}d\bar\zeta\wedge d\zeta \nonumber\\
&=&\int_{D\setminus\delta}f(\zeta)\bigg(\frac{1}{(\zeta-z)^{k+1}}-\frac{1}{(\zeta-z')^{k+1}}\bigg)d\bar\zeta\wedge d\zeta-
\int_{D\setminus\delta}\frac{P_{k-1}(\zeta, z)}{(\zeta-z)^{k+1}}d\bar\zeta\wedge d\zeta \nonumber\\
&+&\int_{D\setminus\delta}\frac{P_{k-1}(\zeta, z')}{(\zeta-z')^{k+1}}d\bar\zeta\wedge d\zeta\nonumber\\
&=& I_4+I_5+I_6.\nonumber
\end{eqnarray}
We need the following lemma to proceed.
\begin{lem}
If $m\geq 2, n\geq 0$, then
$$\int_{D\setminus\delta}\frac{(\bar\zeta-\bar z)^n}{(\zeta-z)^m}d\bar\zeta\wedge d\zeta=\int_{D\setminus\delta}\frac{(\bar\zeta-\bar z')^n}{(\zeta-z')^m}d\bar\zeta\wedge d\zeta=0.$$
Actually more is true:
$$\int_{D\setminus\delta}\frac{(\bar\zeta-\bar s)^n}{(\zeta-s)^m}d\bar\zeta\wedge d\zeta=0$$
for any $s\in\delta$.
\end{lem}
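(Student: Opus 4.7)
The plan is to apply Stokes' theorem to convert the area integral into two boundary-circle integrals, and then to kill each of those by the residue theorem. The key preliminary observation is that since $s \in \delta$ while the region of integration is $\overline{D\setminus\delta}$, the integrand $(\bar\zeta-\bar s)^n/(\zeta-s)^m$ is smooth on that entire closed annular region, so no limiting or regularization argument is needed.

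Concretely, I would introduce the primitive
$$G(\zeta)=\frac{(\bar\zeta-\bar s)^{n+1}}{(n+1)(\zeta-s)^m},$$
which is also smooth on $\overline{D\setminus\delta}$ and satisfies $\bar\partial_\zeta G=(\bar\zeta-\bar s)^n/(\zeta-s)^m$. Writing $\omega=G\,d\zeta$, one has $d\omega=\bar\partial G\,d\bar\zeta\wedge d\zeta$, and so Stokes yields
$$\int_{D\setminus\delta}\frac{(\bar\zeta-\bar s)^n}{(\zeta-s)^m}\,d\bar\zeta\wedge d\zeta \;=\; \int_{\partial D} G\,d\zeta \;-\; \int_{\partial\delta} G\,d\zeta,$$
with both boundary circles oriented counterclockwise (the minus sign reflects that $\partial\delta$ is the inner boundary of $D\setminus\delta$ and is therefore traversed clockwise under the induced orientation).

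The core step is then to show each boundary integral vanishes, and for this I would use the defining equation of the circle to eliminate $\bar\zeta$ from the integrand, reducing $G\,d\zeta$ to a purely rational expression in $\zeta$. On $\partial D=\{|\zeta|=R\}$, substituting $\bar\zeta=R^2/\zeta$ gives
$$G(\zeta)=\frac{(R^2-\bar s\,\zeta)^{n+1}}{(n+1)\,\zeta^{n+1}(\zeta-s)^m},$$
a rational function whose only finite poles are at $\zeta=0$ and $\zeta=s$, both inside $\partial D$. Because the numerator has degree $n+1$ while the denominator has degree $n+1+m$, this function is $O(|\zeta|^{-m})$ at infinity with $m\ge 2$, so its residue at infinity is zero and hence the sum of residues in the finite plane must also vanish, giving $\int_{\partial D}G\,d\zeta=0$. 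For $\partial\delta$ I would apply the same trick with the circle's own defining equation: if $c$ and $\rho$ are the center and radius of $\delta$, then $\bar\zeta=\bar c+\rho^2/(\zeta-c)$ on $\partial\delta$, which again produces a rational function of $\zeta$ whose only finite poles are the two points $c$ and $s$ (both inside $\delta$) and which is again $O(|\zeta|^{-m})$ at infinity, so the identical residue-at-infinity argument yields $\int_{\partial\delta}G\,d\zeta=0$.

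The main delicacy I anticipate is the bookkeeping around the two separate parametrizations and their induced orientations; the only real hypothesis used is $m\ge 2$, entering at the single point where the $O(|\zeta|^{-m})$ decay is needed to kill the residue at infinity, which explains why the borderline case $m=1$ must be excluded from the statement.
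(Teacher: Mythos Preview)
Your argument is correct. The primitive $G$, the Stokes reduction to boundary integrals, the circle substitutions $\bar\zeta=R^2/\zeta$ and $\bar\zeta=\bar c+\rho^2/(\zeta-c)$, and the residue-at-infinity count all go through exactly as you describe; the hypothesis $m\ge 2$ is used precisely where you say.

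The paper, however, proceeds quite differently. It writes
\[
\int_{D\setminus\delta}\frac{(\bar\zeta-\bar s)^n}{(\zeta-s)^m}\,d\bar\zeta\wedge d\zeta
=\frac{1}{(m-1)!}\,\frac{d^{\,m-1}}{dw^{\,m-1}}
\left\{\int_{D}\frac{(\bar\zeta-\bar s)^n}{\zeta-w}\,d\bar\zeta\wedge d\zeta
-\int_{\delta}\frac{(\bar\zeta-\bar s)^n}{\zeta-w}\,d\bar\zeta\wedge d\zeta\right\}\bigg|_{w=s},
\]
and then invokes the earlier Lemma~3.5, which says that each of the two integrals on the right is \emph{anti-holomorphic} in $w$; since $m\ge 2$ forces at least one holomorphic $w$-derivative, the result is zero. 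So the paper's route is ``differentiate under the integral and kill an anti-holomorphic function,'' leaning on machinery already built (Lemma~3.5, which itself rests on the $T$-operator and a residue computation). Your route is a direct Stokes-plus-residues computation that is entirely self-contained and bypasses Lemma~3.5. Both explanations isolate $m\ge 2$ cleanly: in the paper it is the order of the holomorphic derivative, in yours it is the decay exponent at infinity.
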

\begin{proof} We have
$$\int_{D\setminus\delta}\frac{(\bar\zeta-\bar z)^n}{(\zeta-z)^m}d\bar\zeta\wedge d\zeta=\frac{1}{(m-1)!}\frac{d^{m-1}}{dw^{m-1}}
\int_{D\setminus\delta}\frac{(\bar\zeta-\bar z)^n}{\zeta-w}d\bar\zeta\wedge d\zeta\bigg|_{w=z},$$
$$=\frac{1}{(m-1)!}\frac{d^{m-1}}{dw^{m-1}}\{
\int_{D}\frac{(\bar\zeta-\bar z)^n}{\zeta-w}d\bar\zeta\wedge d\zeta-\int_{\delta}\frac{(\bar\zeta-\bar z)^n}{\zeta-w}d\bar\zeta\wedge d\zeta\}\bigg|_{w=z},$$
which is zero by applying Lemma 3.4.
\end{proof}
By Lemma 3.8, we have
$$I_5=I_6=0.$$
Now we only have to estimate $I_4$. Indeed,
$$I_4=-(k+1)\int_{z'}^z\bigg(\int_{D\setminus\delta}\frac{f(\zeta)}{(\zeta-w)^{k+2}}d\bar\zeta\wedge d\zeta\bigg) dw$$
where the integration with respect to $w$ is along the straight line segment from $z'$ to $z$. Now it suffices to estimate
\begin{eqnarray}
&&\int_{D\setminus\delta}\frac{f(\zeta)}{(\zeta-w)^{k+2}}d\bar\zeta\wedge d\zeta-\int_{D}\frac{f(\zeta)-P_k(\zeta, z)}{(\zeta-z)^{k+2}}d\bar\zeta\wedge d\zeta\nonumber\\
&=&\int_{D\setminus\delta}\frac{f(\zeta)}{(\zeta-w)^{k+2}}d\bar\zeta\wedge d\zeta-
\int_{D\setminus\delta}\frac{f(\zeta)-P_k(\zeta, z)}{(\zeta-z)^{k+2}}d\bar\zeta\wedge d\zeta\nonumber\\
&-&\int_{\delta}\frac{f(\zeta)-P_k(\zeta, z)}{(\zeta-z)^{k+2}}d\bar\zeta\wedge d\zeta\nonumber\\
&=&\int_{D\setminus\delta}f(\zeta)\bigg\{\frac{1}{(\zeta-w)^{k+2}}-\frac{1}{(\zeta-z)^{k+2}}\bigg\}d\bar\zeta\wedge d\zeta-\int_{\delta}\frac{f(\zeta)-P_k(\zeta, z)}{(\zeta-z)^{k+2}}d\bar\zeta\wedge d\zeta\nonumber\\
&=&I_7+I_8.\nonumber
\end{eqnarray}
Here we have used Lemma 3.8 to conclude
$$\int_{D\setminus\delta}\frac{P_k(\zeta, z)}{(\zeta-z)^{k+2}}d\bar\zeta\wedge d\zeta=0.$$
Now we have
\begin{eqnarray}
I_7&=&\int_{D\setminus\delta}f(\zeta)\bigg\{\frac{1}{(\zeta-w)^{k+2}}-\frac{1}{(\zeta-z)^{k+2}}\bigg\}d\bar\zeta\wedge d\zeta\nonumber\\
&=&(k+2)\int_{D\setminus\delta}f(\zeta)\int_w^z\frac{ds}{(\zeta-s)^{k+3}}d\bar\zeta\wedge d\zeta\nonumber\\
&=&(k+2)\int_w^z ds \int_{D\setminus\delta}\frac{f(\zeta)-P_{k}(\zeta, s)}{(\zeta-s)^{k+3}}d\bar\zeta\wedge d\zeta\nonumber
\end{eqnarray}
where we have used that
$$\int_{D\setminus\delta}\frac{P_{k}(\zeta, s)}{(\zeta-s)^{k+3}}d\bar\zeta\wedge d\zeta=0,$$
for $s\in\delta$ by Lemma 3.8. Now
\begin{eqnarray}
|I_7|&\leq& C|w-z|\bigg |\int_{D\setminus\delta}\frac{f(\zeta)-P_{k}(\zeta, s)}{(\zeta-s)^{k+3}}d\bar\zeta\wedge d\zeta\bigg|\nonumber\\
&\leq& C|w-z|\sum_{i+j=k}H_\alpha[\partial^i\bar\partial^j f]2\pi\int_{\rho/2}^{2R}\frac{r^{k+\alpha}2rdr}{r^{k+3}}\nonumber\\
&=&C|w-z|\sum_{i+j=k}H_\alpha[\partial^i\bar\partial^j f]2\pi \frac{1}{1-\alpha}((\rho/2)^{\alpha-1}-(2R)^{\alpha-1})\nonumber\\
&\leq& C\sum_{i+j=k}H_\alpha[\partial^i\bar\partial^j f]\rho^\alpha.\nonumber
\end{eqnarray}
In the last inequality we have used that $|w-z|\leq |z-z'|=\rho.$
We have thus shown that
$$^{k+1}Tf(z)-^{k+1}Tf(z')=^{k+2}Tf(z)(z-z')+\partial^k f(z)(\bar z-\bar z')+O(|z-z'|^{1+\alpha})$$
as $z'\to z$. Thus $^{k+1}Tf$ has a total differential and $\partial (^{k+1}Tf)=^{k+2}Tf$.
\end{proof}
In what follows, we will show that $^{k+2}Tf\in C^\alpha(D)$ if $f\in C^{k+\alpha}(D)$.
\begin{lem}
Let $f\in C^{k+\alpha}(D)$. Then
$$|^{k+2}Tf|\leq CR^\alpha \sum_{i+j=k}H_\alpha[\partial^i\bar\partial^jf].$$

\end{lem}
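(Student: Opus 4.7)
The plan is to reduce the estimate to a single absolute-value bound under the integral, and then evaluate the resulting radial integral in polar coordinates centred at $z$.

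First I would apply Lemma 2.2 to control the numerator $f(\zeta)-P_k(\zeta,z)$ in the defining integral
$$^{k+2}Tf(z)=\frac{-(k+1)!}{2\pi i}\int_D\frac{f(\zeta)-P_{k}(\zeta,z)}{(\zeta-z)^{k+2}}d\bar\zeta\wedge d\zeta.$$
Lemma 2.2 yields
$$|f(\zeta)-P_k(\zeta,z)|\leq\Bigl\{\sum_{i+j=k}H_\alpha[\partial^i\bar\partial^j f]\Bigr\}|\zeta-z|^{k+\alpha},$$
so that the absolute value of the integrand is at most
$(\sum H_\alpha[\partial^i\bar\partial^j f])\,|\zeta-z|^{\alpha-2}$.

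Next I would switch to polar coordinates $\zeta-z=re^{i\theta}$, using $|d\bar\zeta\wedge d\zeta|=2r\,dr\,d\theta$. Writing $r_1(\theta)$ for the distance from $z$ to $\partial D$ in direction $\theta$ (so $r_1(\theta)\leq 2R$ since $D$ has diameter $2R$), the estimate becomes
$$|^{k+2}Tf(z)|\leq\frac{(k+1)!}{\pi}\sum_{i+j=k}H_\alpha[\partial^i\bar\partial^j f]\int_0^{2\pi}\!\!\int_0^{r_1(\theta)}r^{\alpha-1}\,dr\,d\theta.$$
Since $\alpha>0$, the inner integral equals $r_1(\theta)^\alpha/\alpha\leq(2R)^\alpha/\alpha$, and integration in $\theta$ produces another harmless constant. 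Absorbing everything into the constant $C$ (which is allowed to depend on $k$ and $\alpha$ but not on $R$) gives
$$|^{k+2}Tf(z)|\leq CR^\alpha\sum_{i+j=k}H_\alpha[\partial^i\bar\partial^j f],$$
and taking the supremum over $z\in D$ yields the claim.

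There is essentially no obstacle: once Lemma 2.2 provides the pointwise Taylor-remainder estimate with the correct $|\zeta-z|^{k+\alpha}$ decay, the two powers $|\zeta-z|^{k+2}$ in the denominator leave exactly a $|\zeta-z|^{\alpha-2}$ singularity, which is integrable in the plane precisely because $\alpha>0$; the only minor point to keep track of is that the radial variable is bounded by $2R$ (not by $R$), which only affects the value of $C$.
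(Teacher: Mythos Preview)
Your proof is correct and follows essentially the same route as the paper: apply Lemma~2.2 to bound the Taylor remainder by $\bigl(\sum_{i+j=k}H_\alpha[\partial^i\bar\partial^j f]\bigr)|\zeta-z|^{k+\alpha}$, pass to polar coordinates centred at $z$, and integrate $r^{\alpha-1}$ out to a radius at most $2R$. The only cosmetic difference is that the paper writes the radial upper limit directly as $2R$ rather than introducing the intermediate $r_1(\theta)$.
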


\begin{proof}
We compute, using Lemma 2.2,
\begin{eqnarray}
|^{k+2}Tf(z)|&=&\frac{(k+1)!}{2\pi}\bigg|\int_D\frac{f(\zeta)-P_{k}(\zeta,z)}{(\zeta-z)^{k+2}}d\bar\zeta\wedge d\zeta\bigg|,\nonumber\\
&\leq&C\sum_{i+j=k}H_\alpha[\partial^i\bar\partial^jf]\int_0^{2\pi}\int_0^{2R}\frac{r^{k+\alpha}2rdrd\theta}{r^{k+2}}\nonumber\\
&\leq& CR^\alpha\sum_{i+j=k}H_\alpha[\partial^i\bar\partial^jf].\nonumber
\end{eqnarray}
\end{proof}
The following is well known [NW, K]
\begin{lem}
If $f\in C^{1+\alpha}(D)$, then
$$^2Tf=T(\partial f)- S_b(f).$$
\end{lem}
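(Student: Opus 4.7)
The plan is to derive the identity $^2Tf = T(\partial f) - S_b f$ by integration by parts (Stokes' theorem) on $D$ with the singular point $\zeta = z$ excised, exploiting the algebraic identity
$$-\frac{1}{(\zeta-z)^2} = \partial_\zeta \frac{1}{\zeta-z}.$$
The idea is to trade the second-order pole in $^2T$ for a genuine $\partial$-derivative falling on $f$, at the price of a boundary integral over $C$, which will produce the $S_b f$ term.

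Concretely, I would fix $z \in \mathrm{Int}(D)$ and, for small $\varepsilon > 0$, let $\varepsilon(z)$ denote the closed disk of radius $\varepsilon$ centered at $z$. On $D \setminus \varepsilon(z)$ I would introduce the $1$-form
$$\omega = \frac{f(\zeta) - f(z)}{\zeta - z}\, d\bar\zeta,$$
so that $d(g\, d\bar\zeta) = (\partial_\zeta g)\, d\zeta \wedge d\bar\zeta$ gives
$$d\omega = \left(\frac{\partial f(\zeta)}{\zeta - z} - \frac{f(\zeta) - f(z)}{(\zeta - z)^2}\right) d\zeta \wedge d\bar\zeta.$$
I would then apply Stokes' theorem on $D \setminus \varepsilon(z)$, whose oriented boundary is $C - \partial\varepsilon(z)$, and pass to the limit $\varepsilon \to 0$. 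The interior integrals converge since $f \in C^{1+\alpha}$ gives $|f(\zeta) - f(z)|/|\zeta-z|^2 \le C|\zeta-z|^{\alpha - 2}$, which is integrable in the plane; the boundary integral over $\partial\varepsilon(z)$ vanishes in the limit because $(f(\zeta)-f(z))/(\zeta-z)$ is uniformly bounded (by $|\partial f| + |\bar\partial f|$) while the arclength is $2\pi\varepsilon$. Using $d\zeta \wedge d\bar\zeta = -d\bar\zeta \wedge d\zeta$ and the definitions of $T$ and $^2T$, the two volume integrals evaluate to $2\pi i\, T(\partial f)(z)$ and $-2\pi i\, {}^2Tf(z)$.

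It then remains to identify the surviving boundary integral over $C$. Writing
$$\int_C \frac{f(\zeta) - f(z)}{\zeta - z}\, d\bar\zeta = 2\pi i\, S_b f(z) - f(z) \int_C \frac{d\bar\zeta}{\zeta - z},$$
I would verify that the second integral vanishes: on $C$ we have $\bar\zeta = R^2/\zeta$, hence $d\bar\zeta = -R^2 \zeta^{-2}\, d\zeta$, and a partial-fraction/residue calculation for $-R^2 \int_C d\zeta/[\zeta^2(\zeta - z)]$ shows that the residues at $\zeta = 0$ and $\zeta = z$ are $-1/z^2$ and $+1/z^2$, which cancel. Assembling the Stokes identity then gives $2\pi i\, T(\partial f)(z) - 2\pi i\, {}^2Tf(z) = 2\pi i\, S_b f(z)$, which is the claimed formula.

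The main obstacle is not depth but bookkeeping: the sign from $d\zeta \wedge d\bar\zeta = -d\bar\zeta \wedge d\zeta$, the orientation of the inner boundary $\partial\varepsilon(z)$, and the two limit statements as $\varepsilon \to 0$. These must be handled carefully so that the signs on $T(\partial f)$, $^2Tf$, and $S_b f$ come out in the correct balance in the final identity.
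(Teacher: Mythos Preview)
Your argument is correct. The paper does not actually supply a proof of this lemma; it simply states the identity as ``well known'' and cites [NW] and [K]. Your Stokes/integration-by-parts argument on $D\setminus\varepsilon(z)$ with the $1$-form $\omega=\dfrac{f(\zeta)-f(z)}{\zeta-z}\,d\bar\zeta$ is exactly the standard route to this formula and is essentially what one finds in those references, so there is nothing to contrast. One small cosmetic point: since $f\in C^{1+\alpha}\subset C^1$, you actually have the stronger bound $|f(\zeta)-f(z)|/|\zeta-z|^2=O(|\zeta-z|^{-1})$ rather than $O(|\zeta-z|^{\alpha-2})$; either suffices for integrability, so this does not affect the argument.
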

\begin{lem}
If $f\in C^{k+\alpha}(D)$, then for $k\geq 1$
$$^{k+2}Tf=^2T(\partial^k f)-\sum_{i=1}^k\partial^i S_b(\partial^{k-i}f).$$
\end{lem}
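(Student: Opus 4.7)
The plan is to induct on $k \geq 1$, with the inductive engine in each step being: apply $\partial$ to the identity at the previous level, invoke Theorem 3.4 to convert $\partial^{k+1}T$ into $^{k+2}T$ on the left-hand side, and use Lemma 3.10 to rewrite $^2T(\partial^{k}f)$ in a form where a single $\partial$-derivative can be absorbed without raising the order of the $T$-operator on the right-hand side.

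For the base case $k=1$, I start from Lemma 3.10: for $f \in C^{1+\alpha}(D)$, $^{2}Tf = T(\partial f) - S_b(f)$. Applying $\partial$ to both sides, the left side becomes $\partial(\partial Tf) = \partial^{2}Tf = {}^{3}Tf$, the last equality being the ``otherwise'' branch of Theorem 3.4 applied to $f \in C^{1+\alpha}$. On the right, $\partial T(\partial f) = {}^{2}T(\partial f)$ by the $k=0$ case of Theorem 3.4 applied to $\partial f \in C^{\alpha}$, while $\partial S_b(f)$ is well defined by differentiation under the integral sign, since $S_b(f)$ is holomorphic in $\mathrm{Int}(D)$. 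This yields $^{3}Tf = {}^{2}T(\partial f) - \partial S_b(f)$, which is the desired identity at $k=1$.

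For the inductive step, assume the identity at level $k$ and take $f \in C^{k+1+\alpha}(D)$; in particular $f \in C^{k+\alpha}(D)$, so the inductive hypothesis applies. Apply $\partial$ to both sides. The left side becomes $\partial({}^{k+2}Tf) = \partial^{k+2}Tf = {}^{k+3}Tf$ by Theorem 3.4 applied to $f \in C^{k+1+\alpha}$. On the right, I first rewrite $^{2}T(\partial^{k}f) = T(\partial^{k+1}f) - S_b(\partial^{k}f)$ via Lemma 3.10 applied to $\partial^{k}f \in C^{1+\alpha}$, then differentiate to obtain $\partial({}^{2}T(\partial^{k}f)) = {}^{2}T(\partial^{k+1}f) - \partial S_b(\partial^{k}f)$, again invoking the $k=0$ case of Theorem 3.4 on $\partial^{k+1}f \in C^{\alpha}$. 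Combining with $-\sum_{i=1}^{k}\partial^{i+1}S_b(\partial^{k-i}f)$ and re-indexing via $j=i+1$ absorbs the extra term $-\partial S_b(\partial^{k}f)$ into the single sum $-\sum_{j=1}^{k+1}\partial^{j}S_b(\partial^{k+1-j}f)$, closing the induction.

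The only real obstacle is bookkeeping: at each stage one must invoke the correct branch of Theorem 3.4 (the ``otherwise'' branch for the top-order $\partial$-derivative of $Tf$, and the lower-order $k=0$ branch for the intermediate derivatives of $T$ applied to $\partial^{j}f$) and apply Lemma 3.10 at the correct regularity class. No new analytic input beyond Theorem 3.4 and Lemma 3.10 is required.
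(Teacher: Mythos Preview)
Your proof is correct and follows essentially the same route as the paper: start from Lemma~3.10, differentiate, use Theorem~3.4 to identify $\partial({}^{k+1}Tf)$ with ${}^{k+2}Tf$ and $\partial T(\partial^{j}f)$ with ${}^{2}T(\partial^{j}f)$, and iterate by induction. The paper's own proof only writes out the base case $^{3}Tf={}^{2}T(\partial f)-\partial S_b(f)$ explicitly and then says ``a mathematical induction finishes the proof''; you have simply supplied the details of that induction, including the re-indexing of the boundary sum.
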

\begin{proof}
By Lemma 3.10, we have
$$^2Tf=T(\partial f)- S_b(f).$$
Taking derivative, we have
$$\partial ^2Tf=\partial T(\partial f)- \partial S_b(f),$$
which is equivalent to
$$ ^3Tf=^2T(\partial f)- \partial S_b(f).$$
A mathematical induction finishes the proof.
\end{proof}
\begin{lem}
If $f\in C^{k+\alpha}(D)$, then
$$H_\alpha[\partial^k S_b f]\leq  C\sum_{i+j=k}H_\alpha[\partial^i\bar\partial^j f].$$

\end{lem}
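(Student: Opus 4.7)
I will prove the estimate by induction on $k\geq 0$. The base case $k=0$ asks for $H_\alpha[S_b f]\leq C H_\alpha[f]$, the classical Plemelj--Privalov type bound for the Cauchy-type boundary operator $S_b$; this is standard (see [NW]).

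For $k\geq 1$, assume the lemma is known for every smaller index. Isolating the $i=k$ term from the sum in Lemma 3.11 yields
\begin{equation*}
\partial^k S_b f \;=\; {}^2T(\partial^k f)\;-\;{}^{k+2}Tf\;-\;\sum_{i=1}^{k-1}\partial^i S_b(\partial^{k-i}f).
\end{equation*}
Lemma 3.3 controls the first term: $H_\alpha[{}^2T(\partial^k f)]\leq C H_\alpha[\partial^k f]$. For each $1\leq i\leq k-1$, the inductive hypothesis applied to $\partial^{k-i}f\in C^{i+\alpha}(D)$ produces
\begin{equation*}
H_\alpha\bigl[\partial^i S_b(\partial^{k-i}f)\bigr] \;\leq\; C\sum_{a+b=i}H_\alpha[\partial^{a+k-i}\bar\partial^b f] \;\leq\; C\sum_{p+q=k}H_\alpha[\partial^p\bar\partial^q f],
\end{equation*}
so these terms are already bounded by the desired right-hand side.

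The remaining term requires the auxiliary Hölder estimate
\begin{equation*}
H_\alpha[{}^{k+2}Tf]\;\leq\; C\sum_{i+j=k}H_\alpha[\partial^i\bar\partial^j f].
\end{equation*}
Since Lemma 3.9 only provides a sup-norm bound, this Hölder version must be proved directly. I would proceed exactly as in the proof of Theorem 3.4: for fixed $z,z'\in D$ with $\rho=|z-z'|$, split $D$ as $\delta\cup(D\setminus\delta)$, where $\delta$ is the disk of radius $2\rho$ centered at $(z+z')/2$. On $\delta$, bound ${}^{k+2}T_\delta f(z)$ and ${}^{k+2}T_\delta f(z')$ separately using Lemma 2.2, obtaining contributions of order $\rho^\alpha\sum H_\alpha[\partial^i\bar\partial^j f]$. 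On $D\setminus\delta$, use Lemma 3.8 to drop the polynomial pieces $P_k(\zeta,\cdot)$ and write the difference as a line integral in $w$ along the segment from $z'$ to $z$; combining the resulting kernel $|\zeta-w|^{-(k+3)}$ with the Lemma 2.2 remainder $|f(\zeta)-P_k(\zeta,w)|\leq (\sum H_\alpha[\partial^i\bar\partial^j f])|\zeta-w|^{k+\alpha}$ produces an integrable singularity $|\zeta-w|^{\alpha-2}$ whose integral over $D\setminus\delta$ gives $O(\rho^{\alpha-1})$, and multiplying by $|w-z|\leq\rho$ yields $O(\rho^\alpha)$. This is structurally identical to the estimation of $I_7$ already carried out in the proof of Theorem 3.4.

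The main obstacle is thus the Hölder estimate for ${}^{k+2}Tf$; all remaining pieces are direct applications of Lemma 3.11, Lemma 3.3, and the induction hypothesis. Assembling the three bounds completes the inductive step and hence proves the lemma.
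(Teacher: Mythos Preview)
Your proposal is correct but takes a genuinely different route from the paper. The paper proves the lemma \emph{directly} as a boundary integral estimate: it first shows (via residues) that $\int_C \frac{P_k(\zeta,w)}{(\zeta-w)^{k+2}}\,d\bar\zeta=0$, then writes $\partial^k S_b f(z)-\partial^k S_b f(z')$ as a line integral in $w$, changes variables to the unit circle by a M\"obius transformation $\tau=(\eta-\omega)/(1-\bar\omega\eta)$, and invokes two arc-length bounds from [NW] to conclude. No induction and no reference to ${}^{k+2}T$ are used; in fact the paper then \emph{derives} the H\"older estimate for ${}^{k+2}Tf$ (Theorem~3.13) from this lemma via Lemma~3.11.

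Your argument reverses that logical order: you prove $H_\alpha[{}^{k+2}Tf]\leq C\sum_{i+j=k}H_\alpha[\partial^i\bar\partial^j f]$ directly by the $\delta/(D\setminus\delta)$ splitting and Lemma~3.8 (essentially re-running the $I_7$ analysis of Theorem~3.4 at order $k+2$), and then read off Lemma~3.12 from Lemma~3.11 by induction. This is legitimate and has the appeal of keeping all estimates in the area-integral framework already developed for Theorem~3.4, avoiding the M\"obius change of variables and the [NW] boundary estimates (except for the classical Plemelj--Privalov bound $H_\alpha[S_b f]\leq C H_\alpha[f]$ you still need as the base case $k=0$). The paper's approach, by contrast, is self-contained for $S_b$ at every $k$ and requires no induction; its main cost is the somewhat ad~hoc conformal change of variables. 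Either ordering of Lemma~3.12 and Theorem~3.13 is viable, and your sketch of the direct H\"older bound for ${}^{k+2}Tf$ is sound.
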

\begin{proof}
We notice that
$$\partial^k S_b f(z)=\frac{k!}{2\pi i}\int_C\frac{f(\zeta)}{(\zeta-z)^{k+1}}d\bar\zeta,$$
and $S_b f(z)$ is holomorphic in $\mathrm{Int}(D)$.
First we need the following
$$\int_C\frac{P_k(\zeta, w)}{(\zeta-w)^{k+2}}d\bar\zeta=0,$$
for $w\in \mathrm{Int}(D)$, and where $P_k(\zeta, w)$ is the Taylor expansion of $f(\zeta)$ of order $k$ at $w$. Indeed,
$$\int_C\frac{P_k(\zeta, w)}{(\zeta-w)^{k+2}}d\bar\zeta=\sum_{l=0}^k\frac{1}{l!}\sum_{i+j=l}\partial^i\bar\partial^j f(w)\int_C\frac{(\zeta-w)^i(\bar\zeta-\bar w)^j}{(\zeta-w)^{k+2}}d\bar\zeta.$$
It suffices to prove the following
$$\int_C\frac{(\bar\zeta-\bar w)^n}{(\zeta-w)^{m+2}}d\bar\zeta=0$$
for $m,n\geq 0$. This is equivalent to prove that
$$\int_C\frac{\bar\zeta^n}{(\zeta-w)^{m+2}}d\bar\zeta=0$$
for $m,n\geq 0$. This is equivalent to prove that
$$\int_C\frac{1}{\zeta^{n+2}(\zeta-w)^{m+2}}d\zeta=0,$$
which is true by the residue theorem. Now we are ready to estimate.
\begin{eqnarray}
&&\frac{2\pi i}{k!}(\partial^k S_b f(z)-\partial^k S_b f(z'))\nonumber\\ &=&\int_Cf(\zeta)\bigg[\frac{1}{(\zeta-z)^{k+1}}-\frac{1}{(\zeta-z')^{k+1}}\bigg]d\bar\zeta\nonumber\\
&=&\frac{1}{k+1}\int_Cf(\zeta)\int_z^{z'}\frac{dw}{(\zeta-w)^{k+2}}d\bar\zeta\nonumber\\
&=&\frac{1}{k+1}\int_z^{z'}dw\int_C \frac{f(\zeta)}{(\zeta-w)^{k+2}}d\bar\zeta\nonumber\\
&=&\frac{1}{k+1}\int_z^{z'}dw\int_C \frac{f(\zeta)-P_k(\zeta,w)}{(\zeta-w)^{k+2}}d\bar\zeta.\nonumber
\end{eqnarray}
In order to apply estimates in [NW], we convert integral to be on unit circle. Setting $w=R\omega$ and $\zeta=R\eta$, this becomes
$$\frac{1}{(k+1)R^k}\int_{z/R}^{z'/R}d\omega\int_{|\eta|=1}\frac{f(R\eta)-P_k(R\eta,R\omega)}{(\eta-\omega)^{k+2}}d\bar\eta$$
where the line integral is taken on the shorter segment of the circle through $z/R$ and $z'/R$ and orthogonal to the unit circle(see [NW] (6.2a)).
Further, setting $\tau=(\eta-\omega)/(1-\bar\omega\eta)$, we have
$$\eta=\frac{\tau+\omega}{1+\bar\omega\tau}, \mbox{  }\eta-\omega=\frac{\tau(1-\omega\bar\omega)}{1+\tau\bar\omega},\mbox{  } d\eta=\frac{(1-\omega\bar\omega)}{(1+\tau\bar\omega)^2}d\tau.$$
Hence
\begin{eqnarray}
&\frac{2\pi }{k!}&|\partial^k S_b f(z)-\partial^k S_b f(z') |\nonumber\\
&\leq &\frac{C}{(R^k}\int_{z/R}^{z'/R}|d\omega|\int_{|\eta|=1}\bigg|\frac{f(R\eta)-P_k(R\eta,R\omega)}{(\eta-\omega)^{k+2}}\bigg||d\bar\eta|\nonumber\\
&\leq& \frac{C}{R^k}\int_{z/R}^{z'/R}|d\omega|\int_{|\eta|=1}\frac{|f(R\frac{\tau+\omega}{1+\bar\omega\tau})-P_k(R\frac{\tau+\omega}{1+\bar\omega\tau},R\omega)||1+\tau\bar\omega|^k}{(1-\omega\bar\omega)^{k+1}}|d\eta|\nonumber\\
&\leq& \frac{C}{R^k}\sum_{i+j=k}H_\alpha[\partial^i\bar\partial^j f]\int_{z/R}^{z'/R}|d\omega|\int_{|\eta|=1}\frac{|R\frac{\tau+\omega}{1+\bar\omega\tau}-R\omega|^{k+\alpha}
|1+\tau\bar\omega|^k}{(1-\omega\bar\omega)^{k+1}}|d\eta|\nonumber\\
&\leq& CR^\alpha\sum_{i+j=k}H_\alpha[\partial^i\bar\partial^j f]\int_{z/R}^{z'/R}\frac{|d\omega|}{(1-\omega\bar\omega)^{1-\alpha}}\int_{|\tau|=1}\frac{|d\tau|}{|1+\bar\omega\tau|^\alpha}.\nonumber
\end{eqnarray}
According to [NW](6.2a and p473), one has
$$\int_{|\tau|=1}\frac{|d\tau|}{|1+\bar\omega\tau|^\alpha}\leq \frac{4\pi}{1-\alpha},$$
$$\int_{z/R}^{z'/R}\frac{|d\omega|}{(1-\omega\bar\omega)^{1-\alpha}}\leq \frac{2}{\alpha}|z-z'|^\alpha\frac{1}{R^\alpha}.$$
Hence
$$H_\alpha[\partial^k S_bf]\leq C\sum_{i+j=k}H_\alpha[\partial^i\bar\partial^j f].$$
\end{proof}
The following is the key result to be used to prove existence of solutions in what follows.
\begin{thm}
If $f\in C^{k+\alpha}(D)$, then
$$H_\alpha[^{k+2}Tf]\leq C\sum_{i+j=k}H_\alpha[\partial^i\bar\partial^j f],$$
$$H_\alpha[^{k+2}\overline Tf]\leq C\sum_{i+j=k}H_\alpha[\partial^i\bar\partial^j f].$$
\end{thm}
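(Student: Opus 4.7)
The plan is to reduce Theorem 3.13 to the already-established decomposition of $^{k+2}T$ together with the two key H\"older estimates proved just above it, namely Lemma 3.3 (the classical estimate $H_\alpha[{}^2Tg]\leq CH_\alpha[g]$) and Lemma 3.12 (the H\"older estimate on $\partial^k S_b$). In effect, no new hard analysis is required: everything is set up by Lemma 3.11.

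First I would invoke Lemma 3.11 to write
$$^{k+2}Tf={}^2T(\partial^k f)-\sum_{i=1}^k \partial^i S_b(\partial^{k-i}f),$$
so that taking H\"older seminorms gives
$$H_\alpha[{}^{k+2}Tf]\leq H_\alpha[{}^2T(\partial^k f)]+\sum_{i=1}^k H_\alpha[\partial^i S_b(\partial^{k-i}f)].$$
Since $f\in C^{k+\alpha}(D)$, each summand $\partial^{k-i}f$ lies in $C^{i+\alpha}(D)$, and in particular $\partial^k f\in C^\alpha(D)$, so each operator in the decomposition is applied in its allowed range.

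Next, I would estimate each piece. For the first term, Lemma 3.3 gives
$$H_\alpha[{}^2T(\partial^k f)]\leq C\,H_\alpha[\partial^k f]\leq C\sum_{i+j=k}H_\alpha[\partial^i\bar\partial^j f].$$
For the sum, Lemma 3.12 applied to the function $\partial^{k-i}f\in C^{i+\alpha}(D)$ yields
$$H_\alpha[\partial^i S_b(\partial^{k-i}f)]\leq C\sum_{p+q=i}H_\alpha[\partial^{p+k-i}\bar\partial^q f]\leq C\sum_{r+s=k}H_\alpha[\partial^r\bar\partial^s f],$$
where I relabeled using $r=p+k-i$, $s=q$ so that $r+s=k$. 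Summing over $i=1,\dots,k$ absorbs the finitely many copies into the constant $C$.

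Combining the estimates gives the stated bound on $H_\alpha[{}^{k+2}Tf]$. The companion bound for ${}^{k+2}\overline{T}f$ then follows by taking complex conjugates: by definition ${}^{k+2}\overline{T}f=\overline{{}^{k+2}T(\bar f)}$, and since $H_\alpha$ is invariant under conjugation and $H_\alpha[\partial^i\bar\partial^j\bar f]=H_\alpha[\bar\partial^i\partial^j f]$, the estimate on ${}^{k+2}T(\bar f)$ transfers directly. I do not anticipate a real obstacle here; the only delicate point is bookkeeping the index shift $r=p+k-i$ so that each $H_\alpha[\partial^r\bar\partial^s f]$ that appears indeed has $r+s=k$ and is therefore controlled by the right-hand side.
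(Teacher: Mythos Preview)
Your proposal is correct and follows essentially the same route as the paper: invoke the decomposition of Lemma~3.11, bound $H_\alpha[{}^2T(\partial^k f)]$ by Lemma~3.3, bound each $H_\alpha[\partial^i S_b(\partial^{k-i}f)]$ by Lemma~3.12, and relabel indices. Your explicit handling of the conjugate operator via ${}^{k+2}\overline{T}f=\overline{{}^{k+2}T(\bar f)}$ is a small addition the paper leaves implicit, but otherwise the arguments coincide.
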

\begin{proof}
By Lemma 3.11, and applying Lemma 3.12, we have
\begin{eqnarray}
^{k+2}Tf&=&^2T(\partial^k f)-\sum_{i=1}^k\partial^i S_b(\partial^{k-i}f).\nonumber\\
H_{\alpha}[^{k+2}Tf]&\leq& H_\alpha[^2T(\partial^k f)]+\sum_{i=1}^kH_\alpha[\partial^i S_b(\partial^{k-i}f)]\nonumber\\
&\leq& C H_\alpha[\partial^k f]+C\left\{\sum_{i=1}^k \frac{1}{i}\sum_{p+q=i}H_\alpha[\partial^p\bar\partial^q\partial^{k-i} f]\right\}\nonumber\\
&\leq& C\sum_{i+j=k}H_\alpha[\partial^i\bar\partial^j f].\nonumber
\end{eqnarray}
\end{proof}

\begin{lem}
If $h\in C^{k+\alpha}(D)$ $(k\geq 0)$, then
$$\|Th\|^{(k+1)}\leq C\|h\|^{(k)},$$
$$\|\overline Th\|^{(k+1)}\leq C \|h\|^{(k)}.$$
\end{lem}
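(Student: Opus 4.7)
The strategy is to unfold the definition $\|Th\|^{(k+1)} = \max_{i+j=k+1}\|\partial^i\bar\partial^j Th\|$ and, for each multi-index $(i,j)$ with $i+j=k+1$, bound $\|\partial^i\bar\partial^j Th\|$ in terms of $\|h\|^{(k)}$ by invoking Theorem~3.4 together with the pointwise and H\"older estimates already proved (Lemma~3.9 and Theorem~3.13). The case $j\geq 1$ is essentially free; the only content is in the single index $j=0$, i.e.\ the purely holomorphic derivative $\partial^{k+1}Th$.

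First, if $j\geq 1$, Theorem~3.4 gives $\partial^i\bar\partial^j Th = \partial^i\bar\partial^{j-1}h$, so $\|\partial^i\bar\partial^j Th\| = \|\partial^i\bar\partial^{j-1}h\|\leq \|h\|^{(k)}$ since $i+(j-1)=k$ (interpreting $\|h\|^{(0)}=\|h\|$ when $k=0$). Second, for $j=0$ Theorem~3.4 yields $\partial^{k+1}Th={}^{k+2}Th$, so the task reduces to showing $\|{}^{k+2}Th\|\leq C\|h\|^{(k)}$. I would split this according to $\|g\| = |g| + (2R)^\alpha H_\alpha[g]$: for the sup-norm apply Lemma~3.9, obtaining $|{}^{k+2}Th| \leq CR^\alpha \sum_{i+j=k} H_\alpha[\partial^i\bar\partial^j h]$, and for the H\"older part apply Theorem~3.13, obtaining $H_\alpha[{}^{k+2}Th] \leq C\sum_{i+j=k} H_\alpha[\partial^i\bar\partial^j h]$. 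In both cases one absorbs the H\"older seminorms into full $C^\alpha$-norms via $H_\alpha[\partial^i\bar\partial^j h] \leq (2R)^{-\alpha}\|\partial^i\bar\partial^j h\|$, which shows that the $R^\alpha$ prefactor in Lemma~3.9 exactly cancels the $(2R)^{-\alpha}$ factor so that the constant remains $R$-independent. Summing the finitely many $(i,j)$ with $i+j=k$ gives $\|{}^{k+2}Th\|\leq C\|h\|^{(k)}$.

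Taking the max over $i+j=k+1$ then yields $\|Th\|^{(k+1)}\leq C\|h\|^{(k)}$. The estimate for $\overline T$ follows by conjugation: since $\overline T f = \overline{T(\bar f)}$ and both $\|\cdot\|$ and $H_\alpha[\cdot]$ are invariant under complex conjugation, the argument applies verbatim to $\bar h$ in place of $h$.

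There is no real conceptual obstacle here: the work is all in Theorems~3.4 and~3.13 and Lemma~3.9 already proved. The only small bookkeeping point is the one just mentioned, namely making sure that the $R$-dependence is tracked uniformly; once one observes that $R^\alpha$ in Lemma~3.9 is paired with the weight $(2R)^\alpha$ in the definition of $\|\cdot\|$, the absorption is immediate.
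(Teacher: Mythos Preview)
Your proof is correct and follows essentially the same route as the paper: split the derivatives of $Th$ via Theorem~3.4 into the trivial cases $j\geq 1$ and the single case $\partial^{k+1}Th={}^{k+2}Th$, then bound the latter by combining Lemma~3.9 and Theorem~3.13 and absorbing the H\"older seminorms using $H_\alpha[\partial^i\bar\partial^j h]\leq (2R)^{-\alpha}\|h\|^{(k)}$. The paper omits the explicit conjugation remark for $\overline T$, but your observation there is the intended reason.
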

\begin{proof}
Let $i+j=k+1$. If $j\geq 1$, then $\partial^i\bar\partial^j Th=\partial^i\bar\partial^{j-1}h$.
Otherwise, $\partial^{k+1}Th=^{k+2}Th$. We have by Lemma 3.9, 3.13
\begin{eqnarray}
\|^{k+2}Th\|&=&|^{k+2}Th|+(2R)^\alpha H_\alpha[^{k+2}Th]\nonumber\\
&\leq&CR^\alpha \sum_{i+j=k}H_\alpha[\partial^i\bar\partial^jh]+(2R)^\alpha C\sum_{i+j=k}H_\alpha[\partial^i\bar\partial^j h]\nonumber\\
&\leq& (CR^\alpha +(2R)^\alpha C)\sum_{i+j=k}H_\alpha[\partial^i\bar\partial^j h]\nonumber\\
&\leq& C\|h\|^{(k)}.\nonumber
\end{eqnarray}
In the last inequality we have used that $H_\alpha[\partial^i\bar\partial^j h]\leq (2R)^{-\alpha}\|h\|^{(k)}$ for $i+j=k$.
The proof is now complete.
\end{proof}
The following is the main result of this section.
\begin{thm}
If $h\in C^{\alpha}(D)$ and $\mu+\nu=m$ , then
$$\|T^\nu\overline T^\mu h\|^{(m)}\leq C \|h\|.$$

\end{thm}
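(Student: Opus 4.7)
The plan is to apply Lemma 3.14 iteratively $m$ times, stripping off one Cauchy-Green-type operator at each step while reducing the order of the semi-norm $\|\cdot\|^{(\cdot)}$ by one. Since $T^\nu\overline T^\mu h$ is built from $m=\mu+\nu$ such operators, after $m$ applications we should land at a bound in terms of $\|h\|$.

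Before iterating, I would check that all intermediate quantities are in the correct Hölder class so that the norms and the lemma are applicable. By Lemma 3.3, each application of $T$ or $\overline T$ raises Hölder regularity by one, so for $h\in C^{\alpha}(D)$ we have $\overline T^i h\in C^{i+\alpha}(D)$ for $0\le i\le \mu$ and $T^j\overline T^\mu h\in C^{(\mu+j)+\alpha}(D)$ for $0\le j\le \nu$. Hence every intermediate semi-norm below is well-defined. I would also adopt the natural convention $\|f\|^{(0)}:=\|f\|$, which agrees with the general formula $\|f\|^{(k)}=\max_{i+j=k}\|\partial^i\bar\partial^j f\|$ at $k=0$; this is the convention implicitly used in the proof of Lemma 3.14 (which at $k=0$ reads $\bar\partial Th=h$ and $\partial Th={}^{2}Th$, whose $\|\cdot\|$-norm is controlled by $\|h\|$ via Lemmas 3.3 and 3.9).

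With these preliminaries, the proof is just the chain
\begin{eqnarray*}
\|T^\nu\overline T^\mu h\|^{(m)}
&\le& C\,\|T^{\nu-1}\overline T^\mu h\|^{(m-1)}
\le \cdots \le C\,\|\overline T^\mu h\|^{(\mu)} \\
&\le& C\,\|\overline T^{\mu-1} h\|^{(\mu-1)}
\le \cdots \le C\,\|h\|^{(0)} = C\,\|h\|,
\end{eqnarray*}
where the first $\nu$ inequalities come from applying the $T$-half of Lemma 3.14 with decreasing parameter $k=m-1,m-2,\ldots,\mu$, and the remaining $\mu$ inequalities come from applying the $\overline T$-half with $k=\mu-1,\ldots,0$. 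The constant $C$ absorbs a uniform bound since it is multiplied a fixed number of times ($m$ is fixed). I do not anticipate a serious obstacle: the only mildly subtle point is confirming the $k=0$ case of Lemma 3.14 together with the convention $\|f\|^{(0)}=\|f\|$, after which the rest is a finite induction.
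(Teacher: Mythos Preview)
Your proposal is correct and follows essentially the same approach as the paper: both proofs simply iterate Lemma 3.14, peeling off one $T$ or $\overline T$ at a time to drop the order of $\|\cdot\|^{(\cdot)}$ by one until reaching $\|h\|$. Your version is in fact a bit more careful than the paper's, since you explicitly verify the intermediate regularity via Lemma 3.3 and flag the convention $\|f\|^{(0)}=\|f\|$ needed for the final step.
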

\begin{proof}
Since, if $\nu\geq 1$,
$$\|T^\nu\overline T^\mu h\|^{(m)}=\|T(T^{\nu-1}\overline T^\mu h)\|^{(m-1+1)},$$
which is less than, by invoking Lemma 3.14
$$C\|T^{\nu-1}\overline T^\mu h\|^{(m-1)},$$
which is less than, by repeating the argument,
$C\| h\|.$
This gives the proof of the theorem.
\end{proof}

\section{A High order integral operator }\label{sec1}
In order to prove the existence of solutions of system with arbitrary initial values we will first solve the equation with zero initial values at the origin. Namely, we are to solve the system of equations
\begin{eqnarray}
\partial^\mu\bar\partial^\nu u(z) &=& a(z, u(z), \mathcal{D}^1 u(z), ...,\mathcal{D}^{m-1} u(z))
\end{eqnarray}
with zero initial values at the origin:
\begin{eqnarray}
 \mathcal{D}^ku(0)&=& 0, \mbox{ for  } 0\leq k\leq m-1
\end{eqnarray}
Every solution $u$ of (1) satisfies the following integral equation, by Lemma 3.2,
\begin{eqnarray}
u(z)=\psi(z) +T^\nu\overline T^\mu a(\zeta, u, \mathcal{D}^1 u, ...,\mathcal{D}^{m-1} u)(z),
\end{eqnarray}
where  $\partial^\mu\bar\partial^\nu\psi(z)=0$.
In what follows, we will introduce function spaces that are Banach with norms to reflect the initial values condition, and
will modify the equation (3) to fit the defined Banach space and to apply fixed point theorem to produce the needed solutions. In doing so, we will take a full advantage of
two parameters: the radius $R$ of $D$ and the radius $\gamma$ of a closed ball in the Banach space where the solutions are sought from.
\subsection{Iteration procedures} For the sake of generality we consider a Banach space $\mathbf{B}(R)$ depending on a positive parameter $R$. $R$ will be less than some fixed positive quantity.
The norm on $\mathbf{B}(R)$ is denoted by $\|\cdot\cdot\cdot\|$; it may also depend on $R$.
The following deals with a map $\mathbf{\Theta}:\mathbf{A}(R,\gamma)\to \mathbf{B}(R)$, where $\mathbf{A}(R,\gamma)(\gamma>0)$ is a closed subset of $\mathbf{B}(R)$ defined as
$\mathbf{A}(R,\gamma)=\{\Omega|\|\Omega\|\leq \gamma\}$. Here we actually have a family of such maps with two parameters $R,\gamma$.
\begin{lem}
 Let $\mathbf{\Theta}:\mathbf{A}(R,\gamma)\to \mathbf{B}(R)$, and
and let $\delta(R,\gamma)$ and $\eta(R,\gamma)$ exist such that for all $\Omega,\Omega'\in \mathbf{A}(R,\gamma)$
$$\|\mathbf{\Theta}(\Omega')-\mathbf{\Theta}(\Omega)\|\leq \delta(R,\gamma)\|\Omega'-\Omega\|,$$
$$\|\mathbf{\Theta}(\Omega)\|\leq \eta(R,\gamma).$$
If there exist $R_0>0, \gamma_0>0$ such that  $\delta(R,\gamma_0)\leq 3/4$ and $\eta(R,\gamma_0)\leq \frac{\gamma_0}{2}$ for $R\leq R_0$. Let $\psi\in\mathbf{B}(R)$ be such that
$\|\psi\|\leq \frac{\gamma_0}{2}$. Then the following equation
$$ \Omega=\psi+\mathbf{\Theta}(\Omega)$$
has a unique solution in $\mathbf{A}(R,\gamma_0)$ for $R\leq R_0$. The solution $\Omega$ is the limit of the sequence
\begin{eqnarray}
\Omega_{N+1}=\psi+\mathbf{\Theta}(\Omega_N)\mbox{  for } N=1,2,...\nonumber
\end{eqnarray}
where $\Omega_1\in \mathbf{A}(R,\gamma_0)$.
\end{lem}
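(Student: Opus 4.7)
The lemma is essentially the Banach contraction mapping principle packaged with the two parameters $R$ and $\gamma$, applied to the self-map
$$F:\mathbf{A}(R,\gamma_0)\to\mathbf{B}(R),\qquad F(\Omega)=\psi+\mathbf{\Theta}(\Omega).$$
I will verify that $F$ carries the closed ball $\mathbf{A}(R,\gamma_0)$ into itself and is a strict contraction on it, then invoke completeness to read off existence, uniqueness, and the convergence of the iteration.

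First I will check the self-map property. For $R\leq R_0$ and $\Omega\in\mathbf{A}(R,\gamma_0)$, the triangle inequality combined with $\|\psi\|\leq\gamma_0/2$ and $\|\mathbf{\Theta}(\Omega)\|\leq\eta(R,\gamma_0)\leq\gamma_0/2$ gives
$$\|F(\Omega)\|\leq\|\psi\|+\|\mathbf{\Theta}(\Omega)\|\leq\frac{\gamma_0}{2}+\frac{\gamma_0}{2}=\gamma_0,$$
so $F(\Omega)\in\mathbf{A}(R,\gamma_0)$. The split into two halves of $\gamma_0$ is the whole point of the hypothesis $\|\psi\|\leq\gamma_0/2$: one half absorbs the inhomogeneous term, the other half absorbs the nonlinear contribution from $\mathbf{\Theta}$.

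Next I will show that $F$ is a contraction. For $\Omega,\Omega'\in\mathbf{A}(R,\gamma_0)$ and $R\leq R_0$, the Lipschitz hypothesis on $\mathbf{\Theta}$ yields
$$\|F(\Omega')-F(\Omega)\|=\|\mathbf{\Theta}(\Omega')-\mathbf{\Theta}(\Omega)\|\leq\delta(R,\gamma_0)\|\Omega'-\Omega\|\leq\frac{3}{4}\|\Omega'-\Omega\|.$$
Since $\mathbf{A}(R,\gamma_0)$ is a closed subset of the Banach space $\mathbf{B}(R)$, it is a complete metric space in its own right, and Banach's fixed point theorem produces a unique $\Omega\in\mathbf{A}(R,\gamma_0)$ with $\Omega=\psi+\mathbf{\Theta}(\Omega)$. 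Starting from any $\Omega_1\in\mathbf{A}(R,\gamma_0)$, the self-map property keeps all iterates $\Omega_N$ in $\mathbf{A}(R,\gamma_0)$, and iterating the contraction bound gives $\|\Omega_{N+1}-\Omega_N\|\leq(3/4)^{N-1}\|\Omega_2-\Omega_1\|$, so $\{\Omega_N\}$ is Cauchy and converges to the fixed point.

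There is essentially no obstacle: the lemma is a bookkeeping of the standard contraction argument. The only point that deserves attention is that the constants $3/4$ and $\gamma_0/2$ must hold \emph{uniformly} for all $R\leq R_0$ at the fixed value $\gamma_0$, since in the subsequent applications one shrinks $R$ to recover smallness from the $R$-dependence of the operators $T^{\nu}\overline T^{\mu}$ while keeping $\gamma_0$ fixed.
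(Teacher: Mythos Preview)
Your proof is correct and is exactly the standard Banach contraction argument that the lemma encodes. Note that the paper itself states Lemma 4.1 without proof, treating it as a routine variant of the contraction mapping principle; your write-up supplies precisely the verification the paper omits.
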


\subsection{Integral equations on Banach spaces}

Here we fix the Banach space to work on.
Let $\mathbf{B}(R)$ be $[C^{m+\alpha}_0(D)]^n$, which is, by Lemma 2.6 a Banach space with norm $||\cdot\cdot\cdot||^{(m)}$ defined as follows:
if $f=(f_1,..., f_n)\in \mathbf{B}(R)$, then $\|f\|^{(m)}=\max_{1\leq i\leq n}\|f_i\|^{(m)}$.
Let $\gamma$ be a positive number, and we consider
$$\mathbf{A}(R,\gamma)=\{ f\in \mathbf{B}(R)|\|f\|^{(m)}\leq \gamma\}.$$
We note that $\mathbf{A}(R,\gamma)$ is a closed subset of $\mathbf{B}(R)$.
Now we assume  $u=(u^1,...,u^n)$ and $a=(a^1,...,a^n)$. According to (1) , we consider equations
$$u^i=\psi^i+T^\nu\overline T^\mu a^i(z, u, \mathcal{D}^1 u, ...,\mathcal{D}^{m-1} u)$$
where $\psi=(\psi^1,...,\psi^n)$ is such that $\partial^\mu\bar\partial^\nu\partial \psi=0$.
To simplify the notation we define
$$\omega^i(f)=T^\nu\overline T^\mu a^i(z, f, \mathcal{D}^1 f, ...,\mathcal{D}^{m-1} f)$$
for all $f\in \mathbf{B}(R)$ and $i=1,2,...,n$.
To assure the zero initial satisfied, we consider
\begin{eqnarray}
\mathbf{\Theta}^i(f)(\zeta)&=&\omega^i(f)(\zeta)-\sum_{p=0}^{m-1}\frac{1}{p!}\sum_{k+l=p}[\partial^k\bar\partial^l \omega^i(f)](0)\zeta^k\bar\zeta^l.
\end{eqnarray}

We first notice that $T, \overline T $ map $C^\alpha(\D)\to C^{1+\alpha}(D)$, and it follows, by the construction (1), $\mathbf{\Theta}^i(f)(\zeta)\in
C^{m+\alpha}_0(D)$ for $f\in \mathbf{B}(R)$. Thus we define a map from $\mathbf{B}(R)$ to $\mathbf{B}(R)$.
\begin{eqnarray}
\mathbf{\Theta}&:&\mathbf{B}(R)\rightarrow\mathbf{B}(R)\nonumber\\
\mathbf{\Theta}(f)&=&(\mathbf{\Theta}^1(f),...,\mathbf{\Theta}^n(f)).
\end{eqnarray}
Here we recall again $\|\mathbf{\Theta}(f)\|^{(m)}=\max_{1\leq i\leq n}\|\mathbf{\Theta}^i(f)\|^{(m)}$ and recall that for $f\in C^{m+\alpha}_0(D)$,
$\|f\|^{(m)}=\max_{k+l=m}\{\|\partial^k\bar\partial^lf\|\}$. In order to apply iteration procedure of Lemma 4.1,
we will first estimate
$$\|\mathbf{\Theta}^i(f)-\mathbf{\Theta}^i(g)\|^{(m)}=\|\mathbf{\omega}^i(f)-\mathbf{\omega}^i(g)\|^{(m)}$$
for all $f, g\in \mathbf{A}(R,\gamma)$ in terms of $\|f-g\|^{(m)}$.
and next we will want to estimate $\|\mathbf{\Theta}^i(f)\|^{(m)}= \|\mathbf{\omega}^i(f)\|^{(m)}$ in terms of a constant.

\subsubsection{Estimate of $\|\mathbf{\omega}^i(f)-\mathbf{\omega}^i(g)\|^{(m)}$}
In this section we always have functions $f,g$ to be in $\mathbf{A}(R,\gamma)$.
We first begin with the estimate by Theorem 3.15:
\begin{eqnarray}
&&\|\mathbf{\omega}^i(f)-\mathbf{\omega}^i(g)\|^{(m)}\nonumber\\
&=&\|T^\nu\overline T^\mu (a^i(z, f, \mathcal{D}^1 f, ...,\mathcal{D}^{m-1} f)-a^i(z, g, \mathcal{D}^1 g, ...,\mathcal{D}^{m-1}g))\|^{(m)}\nonumber\\
&\leq& C\|a^i(z, f, \mathcal{D}^1 f, ...,\mathcal{D}^{m-1} f)-a^i(z, g, \mathcal{D}^1 g, ...,\mathcal{D}^{m-1}g)\|,
\end{eqnarray}

In order to estimate (5), we first estimate the norm $|\cdot\cdot\cdot|$.  In fact, we have
\begin{eqnarray}
&&a^i(\zeta, \mathcal{D}^0f(\zeta), \mathcal{D}^1 f(\zeta), ...,\mathcal{D}^{m-1} f(\zeta))-a^i(\zeta, \mathcal{D}^0g(\zeta), \mathcal{D}^1 g(\zeta), ...,\mathcal{D}^{m-1} g(\zeta))\nonumber\\
&=&\int_0^1\frac{d}{dt}a^i(\zeta, tf(\zeta)+(1-t)g(\zeta),..., t\mathcal{D}^{m-1} f(\zeta)+(1-t)\mathcal{D}^{m-1} g(\zeta))dt\nonumber\\
&=&\sum_{j=0}^n\sum_{p=0}^{m-1}\sum_{k+l=p}A^{p,j}_{k,l}\partial^k\bar\partial^l(f_j-g_j)+\bar A^{p,j}_{k,l}\overline{\partial^k\bar\partial^l(f_j-g_j)}
,
\end{eqnarray}
where
\begin{eqnarray}
A^{p,j}_{k,l}=\int_0^1\frac{\partial a^i}{\partial \eta_{p,j}^{k,l}}(\zeta, \mathcal{W}^0,...,\mathcal{W}^{m-1})dt,\mbox{   }
\bar A^{p,j}_{k,l}=\int_0^1\frac{\partial a^i}{\bar\partial \eta_{p,j}^{k,l}}(\zeta,  \mathcal{W}^0,...,\mathcal{W}^{m-1})dt
\end{eqnarray}
where we have used the shorten notations:
$$\mathcal{W}^{k}=t\mathcal{D}^k f(\zeta)+(1-t)\mathcal{D}^k g(\zeta)$$
for $k=0, 1, ..., m-1$, and $\eta_{p,j}^{k,l}$ is a variable in $\eta_p$.
Therefore, taking norm $\|\cdot\cdot\cdot\|$ on (6) we have, using Lemma 2.4,
\begin{eqnarray}
&&\|a^i(\zeta, \mathcal{D}^0f, \mathcal{D}^1 f, ...,\mathcal{D}^{m-1} f)-a^i(\zeta, \mathcal{D}^0g, \mathcal{D}^1 g, ...,\mathcal{D}^{m-1} g)\|\nonumber\\
&\leq&\sum_{j=0}^n\sum_{p=0}^{m-1}\sum_{k+l=p}\|A^{p,j}_{k,l}\partial^k\bar\partial^l(f_j-g_j)\|+\|\bar A^{p,j}_{k,l}\overline{\partial^k\bar\partial^l(f_j-g_j)}\|\nonumber\\
&\leq&\sum_{j=0}^n\sum_{p=0}^{m-1}\sum_{k+l=p}\|A^{p,j}_{k,l}\|\|\partial^k\bar\partial^l(f_j-g_j)\|+\|\bar A^{p,j}_{k,l}\|\|\overline{\partial^k\bar\partial^l(f_j-g_j)}\|\nonumber\\
&\leq&\sum_{j=0}^n\sum_{p=0}^{m-1}\sum_{k+l=p}\{\|A^{p,j}_{k,l}\|+\|\bar A^{p,j}_{k,l}\|\}\|{\partial^k\bar\partial^l(f-g)}\|\nonumber\\
&\leq&C\sum_{j=0}^n\sum_{p=0}^{m-1}\sum_{k+l=p}\{\|A^{p,j}_{k,l}\|+\|\bar A^{p,j}_{k,l}\|\}R^{m-p}\|f-g\|^{(m)}.
\end{eqnarray}
In order to estimate the quantities $\|A^{p,j}_{k,l}\|, \bar A^{p,j}_{k,l}\|$  in (8), we need to study the ranges of $\mathcal{W}^k, k=0, 1, ..., m-1$ for $f, g\in \mathbf{A}(R,\gamma)$. The following is what we need.
\begin{lem}
If $f, g\in \mathbf{A}(R,\gamma)$, then
\begin{eqnarray}
|\mathcal{W}^k|&\leq& CR^{m-k}\gamma,\mbox{   } k=0, 1, ..., m-1.\nonumber
\end{eqnarray}
\end{lem}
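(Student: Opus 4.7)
The plan is to reduce the bound on $\mathcal{W}^k$ to component-wise bounds on the entries of $\mathcal{D}^k f$ and $\mathcal{D}^k g$, and then exploit the vanishing-order condition inherent in the space $C_0^{m+\alpha}(D)$ via Lemma 2.3. Since $\mathcal{W}^k = t\mathcal{D}^k f + (1-t)\mathcal{D}^k g$ with $t \in [0,1]$, the triangle inequality gives $|\mathcal{W}^k| \leq \max(|\mathcal{D}^k f|, |\mathcal{D}^k g|)$ in the $\max$-norm on $\co^{2^k n}$, so it suffices to prove the bound for a single $f \in \mathbf{A}(R,\gamma)$. Expanding the definition, $|\mathcal{D}^k f|$ is the maximum over components $l = 1, \dots, n$ and indices $i+j = k$ of $|\partial^i \bar\partial^j f_l|_D$.

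Next I would observe the key regularity fact: if $f_l \in C_0^{m+\alpha}(D)$, then for any $i+j = k \leq m-1$ the function $\partial^i\bar\partial^j f_l$ lies in $C_0^{m-k+\alpha}(D)$. Indeed, any derivative of $\partial^i\bar\partial^j f_l$ of order at most $m-k-1$ is a derivative of $f_l$ of order at most $m-1$, and all such derivatives vanish at the origin by the definition of $C_0^{m+\alpha}(D)$.

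Now I would apply Lemma 2.3 to $\partial^i\bar\partial^j f_l \in C_0^{m-k+\alpha}(D)$ to obtain
\[
|\partial^i\bar\partial^j f_l|_D \leq \|\partial^i\bar\partial^j f_l\| \leq CR^{m-k}\,\|\partial^i\bar\partial^j f_l\|^{(m-k)}.
\]
By the inductive description $\|h\|^{(l)} = \max_{p+q=l}\|\partial^p\bar\partial^q h\|$, the quantity $\|\partial^i\bar\partial^j f_l\|^{(m-k)}$ is bounded by $\|f_l\|^{(m)}$, hence by $\|f\|^{(m)} \leq \gamma$. Combining these estimates and taking the maximum over $i,j,l$ yields $|\mathcal{D}^k f| \leq CR^{m-k}\gamma$, and likewise for $g$, which gives the stated bound on $\mathcal{W}^k$.

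There is no real obstacle here; this is essentially a direct application of Lemma 2.3 once one recognizes the crucial bookkeeping that $\partial^i\bar\partial^j f_l$ inherits a vanishing-order of exactly $m-k-1$ at the origin. The only mild subtlety is keeping the constant $C$ independent of $R$, which follows because the constants in Lemma 2.3 (arising from Taylor remainder integration and $\|z\| = 3R$) are themselves $R$-independent.
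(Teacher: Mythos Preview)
Your proof is correct and follows essentially the same route as the paper. The only cosmetic difference is that the paper invokes Lemma~2.4 directly to obtain $\|\partial^i\bar\partial^j f\|\leq CR^{m-k}\|f\|^{(m)}$, whereas you unpack that step by citing Lemma~2.3 and the observation $\partial^i\bar\partial^j f_l\in C_0^{m-k+\alpha}(D)$ --- which is precisely how Lemma~2.4 itself is proved.
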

\begin{proof}
If $f$ is a function in $C_0^{m+\alpha}(D)$, then by Lemma 2.4
we have, if $i+j=k\leq m-1$,
$$\|\partial^i\bar\partial^j f\|\leq CR^{m-k}\|f\|^{(m)}\leq CR^{m-k}\gamma.$$
Particularly, it implies
$$|\partial^i\bar\partial^j f|\leq CR^{m-k}\gamma,$$
whence, by the definition of norm, we have
$$|\mathcal{W}^k|\leq CR^{m-k}\gamma$$
for $k=0, 1, ..., m-1$.
\end{proof}
To continue on the estimates, we need a lemma on Lipschitz properties of $C_0^{m+\alpha}(D)$.
\begin{lem}
Let $f\in C_0^{m+\alpha}(D)$. Then, for $\zeta', \zeta\in D$, and $i+j=l\leq m-1$, we have
$$|\partial^i\bar\partial^j f(\zeta')-\partial^i\bar\partial^j f(\zeta)|\leq C R^{m-l-1}\|f\|^{(m)}|\zeta'-\zeta|.$$
\end{lem}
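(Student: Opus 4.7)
The plan is to reduce the estimate to a one-line application of the fundamental theorem of calculus on the function $g := \partial^i\bar\partial^j f$, and then control the two first-order derivatives of $g$ uniformly via Lemma~2.4, which is precisely tailored to convert vanishing-at-origin information into an $R$-power gain.

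Since $f\in C_0^{m+\alpha}(D)$ and $i+j=l\le m-1$, the function $g=\partial^i\bar\partial^j f$ lies in $C^{m-l+\alpha}(D)\subset C^1(D)$. The disk $D=\{|z|\le R\}$ is convex, so for any $\zeta,\zeta'\in D$ the segment $\zeta_t:=t\zeta'+(1-t)\zeta$, $t\in[0,1]$, stays inside $D$, and I can write
$$g(\zeta')-g(\zeta)=\int_0^1\Bigl[\partial g(\zeta_t)\,(\zeta'-\zeta)+\bar\partial g(\zeta_t)\,(\bar\zeta'-\bar\zeta)\Bigr]\,dt,$$
which is just the chain rule and fundamental theorem of calculus applied to the one-variable map $t\mapsto g(\zeta_t)$. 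Taking absolute values gives
$$|g(\zeta')-g(\zeta)|\le|\zeta'-\zeta|\Bigl(|\partial^{i+1}\bar\partial^j f|_D+|\partial^i\bar\partial^{j+1} f|_D\Bigr).$$

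Next I would control the two sup-norms on the right. Both $\partial^{i+1}\bar\partial^j f$ and $\partial^i\bar\partial^{j+1} f$ are derivatives of $f$ of total order $l+1$, and since $l+1\le m$ the hypothesis of Lemma~2.4 is met at level $l+1$. By definition of $\|\cdot\|^{(l+1)}$ and Lemma~2.4 we have
$$\|\partial^{i+1}\bar\partial^j f\|\;\le\;\|f\|^{(l+1)}\;\le\;CR^{m-l-1}\,\|f\|^{(m)},$$
and identically for $\|\partial^i\bar\partial^{j+1}f\|$. Since $|h|_D\le\|h\|$ by the very definition of the H\"older norm, both terms in the parenthesis above are at most $CR^{m-l-1}\|f\|^{(m)}$. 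Combining the two displays yields the desired inequality.

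There is no genuine obstacle: the lemma is essentially the first-order Taylor expansion of $\partial^i\bar\partial^j f$ at $\zeta$, with the linear coefficient controlled uniformly by Lemma~2.4. The only content is the bookkeeping $i+j=l\le m-1\Rightarrow l+1\le m$, which is precisely what keeps Lemma~2.4 applicable and produces the correct power $R^{m-l-1}$ in the estimate.
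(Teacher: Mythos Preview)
Your proposal is correct and follows essentially the same route as the paper's own proof: write $\partial^i\bar\partial^j f(\zeta')-\partial^i\bar\partial^j f(\zeta)$ via the one-variable integral of the chain-rule derivative along the segment, bound by the sup-norms of the two $(l+1)$-order derivatives, and then invoke Lemma~2.4 to convert $\|f\|^{(l+1)}$ into $CR^{m-l-1}\|f\|^{(m)}$. There is no meaningful difference between your argument and the paper's.
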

\begin{proof} We have
\begin{eqnarray}
\partial^i\bar\partial^j f(\zeta')-\partial^i\bar\partial^j f(\zeta)&=&\int_0^1\frac{d}{dt}\{\partial^i\bar\partial^j f(t\zeta'+(1-t)\zeta)\}dt\nonumber\\
&=&\int_0^1\partial^{i+1}\bar\partial^j f(t\zeta'+(1-t)\zeta)(\zeta'-\zeta)+\partial^{i}\bar\partial^{j+1} f(t\zeta'+(1-t)\zeta)\overline {(\zeta'-\zeta)}dt,\nonumber
\end{eqnarray}
whence
\begin{eqnarray}
|\partial^i\bar\partial^j f(\zeta')-\partial^i\bar\partial^j f(\zeta)|&\leq& (|\partial^{i+1}\bar\partial^j f|+|\partial^{i}\bar\partial^{j+1} f|)|\zeta'-\zeta|\nonumber\\
&\leq& 2\|f\|^{(l+1)}|\zeta'-\zeta|\nonumber\\
&\leq& CR^{m-l-1}\|f\|^{(m)}\nonumber
\end{eqnarray}
where in the last inequality we have used Lemma 2.4.
\end{proof}
In the following we will use a compact set in $\Omega$ defined as follows:
$$E(R, \gamma)=D\times\Pi_{k=0}^{m-1}\{z\in\co^{n2^k}||z|\leq CR^{m-k}\gamma\}.$$
 Now we first define some constants, whose usefulness will be self-evident during proofs below and later sections.
\begin{eqnarray}
A(R, \gamma)&=&\max\bigg\{\bigg|\frac{\partial a^i}{\partial \eta_{p,j}^{k,l}}\bigg|_{E(R,\gamma)},\bigg|\frac{\partial a^i}{\bar\partial \eta_{p,j}^{k,l}}\bigg|_{E(R,\gamma)} \bigg |,\bigg|\frac{\partial a^i}{\partial\zeta}\bigg|_{E(R,\gamma)},\bigg|\frac{\partial a^i}{\partial\bar\zeta}\bigg|_{E(R,\gamma)}\bigg\}\nonumber\\
H_\alpha^A[R,\gamma]&=&\max\bigg\{H_\alpha\bigg [\frac{\partial a^i}{\partial \eta_{p,j}^{k,l}}\bigg]_{E(R,\gamma)}, H_\alpha\bigg [\frac{\partial a^i}{\partial \bar\eta_{p,j}^{k,l}}\bigg]_{E(R,\gamma)},\bigg|\frac{\partial a^i}{\partial\zeta}\bigg|_{E(R,\gamma)},\bigg|\frac{\partial a^i}{\partial\bar\zeta}\bigg|_{E(R,\gamma)}\bigg\}\nonumber
\end{eqnarray}
where max is taken over $k+l=p; 0\leq p\leq m-1; i,j=1,..n$.
 Let $h$ be a function defined on open set $\Omega$ in $\co^N$. The H\"older constant of $h$ is defined
$$H_\alpha[h]=\sup\bigg\{\frac{|h(u)-h(v)|}{|u-v|^\alpha}\bigg|u,v\in \Omega\bigg\}.$$
Now if we restrict $h$ to be on one variable: $g(\zeta)=h(u_1,\cdot\cdot\cdot,u_i,\zeta,u_{i+1},\cdot\cdot\cdot,u_N)$, then
$H_\alpha[g]\leq H_\alpha[h]$. This fact is used below.

It is obvious from (7) that
\begin{eqnarray}
&&\left |A^{p,j}_{k,l}\right|\leq A(R,\gamma)
,\mbox{  }\left |\bar A^{p,j}_{k,l}\right |\leq A(R,\gamma).
\end{eqnarray}
Now we estimate $H_\alpha [A^{p,j}_{k,l}]$ by inserting terms,
\begin{eqnarray}
&&A^{p,j}_{k,l}(\zeta')-A^{p,j}_{k,l}(\zeta)\nonumber\\
&=&\int_0^1\{\frac{\partial a^i}{\partial \eta_{p,j}^{k,l}}(\zeta', \mathcal{W}^0(\zeta'),...,\mathcal{W}^{m-2}(\zeta'),\mathcal{W}^{m-1}(\zeta') )-\frac{\partial a^i}{\partial \eta_{p,j}^{k,l}}(\zeta, \mathcal{W}^0(\zeta),...,\mathcal{W}^{m-2}(\zeta),\mathcal{W}^{m-1}(\zeta) )\}dt\nonumber\\
&=&\int_0^1\{\frac{\partial a^i}{\partial \eta_{p,j}^{k,l}}(\zeta', \mathcal{W}^0(\zeta'),...,\mathcal{W}^{m-2}(\zeta'),\mathcal{W}^{m-1}(\zeta') )-\frac{\partial a^i}{\partial \eta_{p,j}^{k,l}}(\zeta, \mathcal{W}^0(\zeta'),...,\mathcal{W}^{m-2}(\zeta'),\mathcal{W}^{m-1}(\zeta') )\}dt\nonumber\\
&+&\int_0^1\{\frac{\partial a^i}{\partial \eta_{p,j}^{k,l}}(\zeta, \mathcal{W}^0(\zeta'),...,\mathcal{W}^{m-2}(\zeta'),\mathcal{W}^{m-1}(\zeta') )-\frac{\partial a^i}{\partial \eta_{p,j}^{k,l}}(\zeta, \mathcal{W}^0(\zeta), \mathcal{W}^1(\zeta'),...,\mathcal{W}^{m-2}(\zeta'),\mathcal{W}^{m-1}(\zeta') )\}dt\nonumber\\
&+&\cdot\cdot\cdot\nonumber\\
 &+&\int_0^1\{\frac{\partial a^i}{\partial \eta_{p,j}^{k,l}}(\zeta, \mathcal{W}^0(\zeta),...,\mathcal{W}^{m-2}(\zeta),\mathcal{W}^{m-1}(\zeta') )-\frac{\partial a^i}{\partial \eta_{p,j}^{k,l}}(\zeta, \mathcal{W}^0(\zeta), \mathcal{W}^1(\zeta),...,\mathcal{W}^{m-2}(\zeta),\mathcal{W}^{m-1}(\zeta) )\}dt,\nonumber
 \end{eqnarray}
 whence, since $a$ is of $C^{1+\alpha}$ and by Lemma 4.3,
\begin{eqnarray}
 &&|A^{p,j}_{k,l}(\zeta')-A^{p,j}_{k,l}(\zeta)|\nonumber\\
 &\leq&H_\alpha^A[R,\gamma]\bigg\{|\zeta'-\zeta|^\alpha+\sum_{l=0}^{m-1}\sum_{ i+j=l}
 (|\partial^i\bar \partial^j f(\zeta')-\partial^i\bar \partial^j f(\zeta)|+|\partial^i\bar \partial^j g(\zeta')-\partial^i\bar \partial^j g(\zeta)|)^\alpha\bigg\}\nonumber\\
 &\leq&
 H_\alpha^A[R,\gamma]\bigg\{|\zeta'-\zeta|^\alpha+C\sum_{l=0}^{m-1}\sum_{ i+j=l}
 (R^{m-l-1}\gamma)^\alpha|\zeta'-\zeta|^\alpha \bigg\}\nonumber\\
 \end{eqnarray}
Therefore, from (8) we have
\begin{eqnarray}
H_\alpha [A^{p,j}_{k,l}] &\leq&H_\alpha^A[R,\gamma]C(R,\gamma),
\end{eqnarray}
where $C(R,\gamma)=1+C\sum_{l=0}^{m-1}(R^{m-l-1}\gamma)^\alpha$.
By (10), (11), we have
\begin{eqnarray}
\|A^{p,j}_{k,l}\|&=&|A^{p,j}_{k,l}|+(2R)^\alpha H_\alpha [A^{p,j}_{k,l}] \nonumber\\
&\leq& A(R,\gamma)+(2R)^\alpha H_\alpha^A[R,\gamma]C(R,\gamma)
\end{eqnarray}
Of course, $\|\bar A^{p,j}_{k,l}\|$ satisfies the same estimate. 
Putting (6),(8),(11), and (12) together we have thus shown that
\begin{eqnarray}
\|a^i(\zeta, \mathcal{D}^0f, \mathcal{D}^1 f, ...,\mathcal{D}^{m-1} f)-a^i(\zeta, \mathcal{D}^0g, \mathcal{D}^1 g, ...,\mathcal{D}^{m-1} g)\|&\leq& \delta(R,\gamma)\|f-g\|^{(m)}\nonumber\\
\|\mathbf{\omega}^i(f)-\mathbf{\omega}^i(g)\|^{(m)}&\leq& \delta(R,\gamma)|\|f-g\|^{(m)}
\end{eqnarray}
where 
\begin{eqnarray}
\delta(R,\gamma)=C(\sum_{p=0}^{m-1}R^{m-p})(A(R,\gamma)+(2R)^\alpha H_\alpha^A[R,\gamma]C(R,\gamma)).
\end{eqnarray}
\subsubsection{Estimate of $\|\mathbf{\Theta}^i(f)\|^{(m)}$}
 We begin with
\begin{eqnarray}
&&a^i(\zeta, \mathcal{D}^0f(\zeta), \mathcal{D}^1 f(\zeta), ...,\mathcal{D}^{m-1} f(\zeta))-a^i(0,...,0)\nonumber\\
&=&\int_0^1\frac{d}{dt}a^i(t\zeta, tf(\zeta),..., t\mathcal{D}^{m-1} f(\zeta))dt\nonumber\\
&=&C^i\zeta+\bar C^i\bar\zeta+\sum_{j=0}^n\sum_{p=0}^{m-1}\sum_{k+l=p}B^{p,j}_{k,l}\partial^k\bar\partial^l(f_j)+\bar B^{p,j}_{k,l}\overline{\partial^k\bar\partial^l(f_j)}\nonumber
\end{eqnarray}
where we have
$$B^i=\int_0^1\frac{\partial a^i}{\partial\zeta}(t\zeta, tf(\zeta),..., t\mathcal{D}^{m-1} f(\zeta))dt,\bar B^i=\int_0^1\frac{\partial a^i}{\partial\bar\zeta}(t\zeta, tf(\zeta),..., t\mathcal{D}^{m-1} f(\zeta))dt,$$
$$B^{p,j}_{k,l}=\int_0^1\frac{\partial a^i}{\partial \eta_{p,j}^{k,l}}(t\zeta, tf(\zeta),..., t\mathcal{D}^{m-1} f(\zeta))dt,\bar B^{p,j}_{k,l}=\int_0^1\frac{\partial a^i}{\bar\partial \eta_{p,j}^{k,l}}(t\zeta, tf(\zeta),..., t\mathcal{D}^{m-1} f(\zeta))dt,$$
It follows by Theorem 3.15
\begin{eqnarray}
\|\omega^i(f)\|^{(m)}&\leq& C\|a^i(f)\|.\nonumber
\end{eqnarray}
Following the above estimate, we have
\begin{eqnarray}
\|a^i(f)\|\leq C(|a^i(0)|+A(R,\gamma)R+\delta(R,\gamma)).
\end{eqnarray}

\section{Proof of Theorem 1.1}

\subsection{Case of zero initial values}
First, we note from (14) that for each $\gamma$, $\lim_{\R\to 0}\delta(R,\gamma)=0$.
Let $\eta(R,\gamma)=C|a(0)|+CA(R,\gamma)R+C\delta(R,\gamma)$ from (16).
Now we choose $\gamma_0$ so large that $\frac{\gamma_0}{4}>C|a(0)|$. Then we choose $R$ sufficiently small that
$CA(R,\gamma)R+C\delta(R,\gamma_0)\leq \frac{\gamma_0}{4}$.
As a result, we have
$$\eta(R,\gamma_0)\leq\frac{\gamma_0}{2}.$$
If necessary, we choose $R$ further so that
$$\delta(R,\gamma_0)<3/4.$$
Let $\psi(z)$ be a homogenous polynomial map of degree $m$ such that $\|\psi\|\leq \frac{\gamma_0}{2}$. Then
we can apply Lemma 4.1 to
$$u=\psi+\mathbf{\Theta}(u)$$
on $A(R,\gamma_0)$ to get the desired unique solution that vanish up to order $m-1$ at the origin. $\psi$ be arbitrary, we have obtained infintely many solutions.
\subsection{General initial values}
To get the general case, we consider a new system, $p(z)=(p^1,...,p^n)$ with $\partial^i\bar\partial^j p(0)=c_{i, \bar j}$.
$$\partial^\mu\bar\partial^\nu \tilde u=a(z, \tilde u+p(z), \mathcal{D}^1\tilde u+\mathcal{D}^1p(z),..., \mathcal{D}^{m-1}\tilde u+\mathcal{D}^{m-1}p(z)).$$
This can be written as
$$\partial^\mu\bar\partial^\nu \tilde u=b(z, \tilde u, \mathcal{D}^1\tilde u,..., \mathcal{D}^{m-1}\tilde u).$$
 We can solve $\tilde u$ as just proved so that $\tilde u$ vanishes up to order $m-1$ at the origin.
Then $u=\tilde u+p(z)$ solves the original equation with desired property of derivatives at the origin since $\partial^\mu\bar\partial^\nu \tilde u=
\partial^\mu\bar\partial^\nu u$. The proof is complete.

\normalsize

\bigskip
Department of Mathematical Sciences

Indiana University - Purdue University Fort Wayne

Fort Wayne, IN 46805-1499, USA.

pan@ipfw.edu
\end{document}